\newtheorem{theorem}{Theorem}[section]
\newtheorem{proposition}[theorem]{Proposition}
\newtheorem{lemma}[theorem]{Lemma}
\newtheorem{assumption}{Assumption}
\newtheorem{problem}{Problem}
\newtheorem{definition}{Definition}
\newtheorem{remark}{Remark}
\newcommand{\eqlab}[1]{\label{eq:#1}}
\renewcommand{\eqref}[1]{(\ref{eq:#1})}
\newcommand{\Wert}{{\vert\kern-0.25ex\vert\kern-0.25ex\vert}}
\newcommand{\figref}[1]{Fig.~\ref{fig:#1}}
\newcommand{\figlab}[1]{\label{fig:#1}}
\newcommand{\propref}[1]{Proposition~\ref{proposition:#1}}
\newcommand{\proplab}[1]{\label{proposition:#1}}
\newcommand{\lemmaref}[1]{Lemma~\ref{lemma:#1}}
\newcommand{\lemmalab}[1]{\label{lemma:#1}}
\newcommand{\remlab}[1]{\label{remark:#1}}
\newcommand{\thmref}[1]{Theorem~\ref{theorem:#1}}
\newcommand{\thmlab}[1]{\label{theorem:#1}}
\newcommand{\defnlab}[1]{\label{defn:#1}}
\newcommand{\defnref}[1]{Definition~\ref{defn:#1}}
\newcommand{\appref}[1]{Appendix~\ref{app:#1}}
\newcommand{\assumpref}[1]{Assumption~\ref{assumption:#1}}
\newcommand{\assumplab}[1]{\label{assumption:#1}}
\newtheorem{cor}[theorem]{Corollary}
\newcommand{\corlab}[1]{\label{cor:#1}}
\newcommand{\corref}[1]{Corollary~\ref{cor:#1}}
\newcommand{\secref}[1]{section~\ref{sec:#1}}
\newcommand{\seclab}[1]{\label{sec:#1}}
\newcommand\rspp[1]{{\color{black}{#1}}}
\newcommand\rsp[1]{{\color{black}{#1}}}
\newcommand\rspf[1]{{\color{black}{#1}}}
\author[1]{Peter De Maesschalck}
\author[2]{Kristian Uldall Kristiansen}
\affil[1]{Hasselt University, Campus Diepenbeek, Agoralaan Gebouw D, 3590 Diepenbeek, Belgium}
\affil[2]{Technical University of Denmark, Lyngby, 2800 Kgs. Lyngby, Denmark}
\affil[1]{{\tt peter.demaesschalck@uhasselt.be}}
\affil[2]{{\tt krkri@dtu.dk}}
\title{On $k$-summable normal forms of vector fields with one zero eigenvalue}
\begin{document}
\maketitle
\date{}

\begin{abstract}\noindent
In this paper, we study normal forms of analytic saddle-nodes in $\mathbb C^{n+1}$ with any Poincar\'e rank $k\in \mathbb N$. The approach and the results generalize those of Bonckaert and De Maesschalck from 2008 that considered $k=1$. In particular, we introduce a Banach convolutional algebra that is tailored to study differential equations in the Borel plane of order $k$.  {One of the subtleties that we take care of in this paper, is that nontrivial Jordan blocks are allowed in the linear part of the vector field.} 
We anticipate that our approach can stimulate new research and be used to study different normal forms in future work.
\end{abstract}
\textit{Keywords:} normal forms, center manifolds, Gevrey properties, summability, saddle-nodes;
\newline
\textit{2020 Mathematics Subject Classification:} 34C23, 34C45, 37G05, 37G10


\section{Introduction}



The reference \cite{bonckaert2008a} studied normal forms of analytic vector fields in $\mathbb C^{n+1}$, for which the origin is an isolated singularity with one single zero eigenvalue of the linearization. In particular, the authors proved -- under the additional assumption that all other eigenvalues were distinct and satisfied a nonresonance condition -- that formal changes of coordinates to normal form satisfy Gevrey growth estimates.   Note that in the paper, the authors used a blow-up transformation (and a regular coordinate change combined with a change of time) to arrive at the pre-normal form:
\begin{equation}\eqlab{prenormalform}
\begin{aligned}
 \dot{x}=\frac1k x^{k+1},\qquad \dot{y} &= A y+x f(x,y).
\end{aligned}
\end{equation}

The constant $k$ is called the Poincar\'e rank, see \cite{balser2000a}.
When $k=1$ the authors of \cite{bonckaert2008a} also proved (under stronger resonance conditions) that the formal normal form transformation is $1$-summable. For this statement, the authors set up a     fix-point equation for the Borel-transform of the normalizing transformation, working on spaces of analytic functions $H=H(w)$ that have at most exponential growth (of order $1$), i.e.~there exists a $\mu>0$ and a $C>0$ such that $$\vert H(w) \vert e^{-\mu \vert w\vert} \le C\quad \forall\,w\in S(\theta,\alpha). $$ Here \begin{align*}
  S(\theta,\alpha):=\{x\in \mathbb C\,:\,\vert \operatorname{Arg}(x)-\theta \vert <\alpha/2\}.
 \end{align*} is a complex sector centered along the direction $\theta\in (-\pi,\pi)$ and of opening $\alpha\in (0,2\pi)$ in the complex plane. The desired normal form transformation is then obtained through the Laplace transform
\begin{align*}
 \mathcal L_{\theta}(H)(x)= \int_{0}^{\theta \infty} H(w) e^{-w/x} dw.
\end{align*}
Here $\mathcal L_{\theta}(H)$ can be analytically continued to an analytic function on a local sector $B(\nu)\cap S(\theta,\alpha+\pi)$, with $B(\nu)$ being the open ball centered at the origin with radius $\nu>0$, see further details below.

The novelty of \cite{bonckaert2008a} lies in the introduction of a norm turning the space of analytic funcions of exponential growth (as above) into a Banach convolutional algebra. Similar norms have been used by different authors, see e.g. \cite{costin2009,sauzin2015}, but not in a systematic way for equations of the type \eqref{prenormalform}. We refer to \cite{de2020a} for an interesting application of the convolutional algebra in the context of slow-fast equations. 

In this paper, we consider the general case of \eqref{prenormalform} with $k\in \mathbb N$, which was left open by the authors of \cite{bonckaert2008a}. {At the same time, we will allow $A$ to have nontrivial Jordan blocks.} In contrast to the belief expressed by the authors of \cite{bonckaert2008a}, we show that this extension does not rely on multi-summability, see \cite{braaksma1992a}. Instead, we can simply work with a   generalized Laplace transform of order $k\in \mathbb N$:
\begin{align*}
 \mathcal L_{\theta,k}(H)(x): = \int_{0}^{\theta \infty} H(w) e^{-w^k/x^k} kdw.
\end{align*}
\rsp{For more general references on multi-summability and exact asymptotics in the context of complex linear systems, we refer to \cite{balser1991a,malgrange1992a,put2003a,sibuya1975linear}}.
\begin{remark}
    In contrast to the Laplace transforms in \cite{balser2000a} or \cite{braaksma1992a}, this version does not commute with the ramification operator $R_k\colon w\mapsto w^{1/k}$:
    $
        \mathcal L_{\theta,k}(H)\circ R_k\not= \mathcal L_{k\theta,1}(H\circ R_k).
    $
    Instead, we have
    \[
        \mathcal L_{\theta,k}(H)\circ R_k = \mathcal L_{k\theta,1}(\widetilde{H}\circ R_k)
    \]
    with $\widetilde{H}(w)=H(w).w^{1-k}$.  There are some advantages to using our version of the Laplace transform, but also some disadvantages: the multiplication with $w^{1-k}$ in the above commutation causes mild singular behaviour at the origin and as a result several properties proven in \cite{bonckaert2008a} and notions introduced there for $k=1$ cannot be directly transported to the general case.
\end{remark}

Notice that this definition of $\mathcal L_{\theta,k}$ agrees with the usual one for $k=1$ ($\mathcal L_{\theta,1}=\mathcal L_{\theta}$). Moreover, $\mathcal L_{\theta,k}$ is for each $k\in 
\mathbb N$ defined on the space analytic functions $H=H(w)$ that have at most exponential growth of order $k$,  i.e. functions $H$ for which there exists a $\mu^k>0$ and a $C>0$ such that 
\begin{align*}
 \vert H(w)\vert e^{-\mu^k \vert w\vert^k}\le C\quad \forall\, w\in S(\theta,\alpha),
\end{align*}
    see further details below. 
    
Note that we have
\begin{align}
 H(w)=w^n\quad \Longrightarrow \quad \mathcal L_{\theta,k}(H)(x) = {x^{n+1}} \Gamma\left(\frac{n+1}{k}\right),\eqlab{Lmon0}
\end{align}
which explains that formally, Laplace transforms of convergent series may produce divergent series, or vice-versa, the inverse Laplace operator (Borel transform) may take away divergent behaviour for the same reason.  Note that the shift in exponents ($w^n\mapsto x^{n+1}$) restricts the Borel transform to those series that are $O(x)$.  It is a small price to pay, especially considering the fact that the more traditional Laplace transform in \cite{braaksma1992a} only transforms to series that are $O(x^k)$, which would form a somewhat bigger nuisance.  The Laplace transform of \cite{balser2000a} avoids this problem as well, but seems to be more difficult w.r.t.~finding a suitable norm in which the convolution operator is bounded, see Remark~\ref{rm:balser} later.

{The vector field \eqref{prenormalform} for which we claim to provide a simpler form is an example of a so-called 1-resonant system: there is only one source of resonance (namely the zero eigenvalue in the $x$ variable).  The simpler form in our case has the intent to reduce the number of nonresonant terms in the expression; in a narrow interpretation of the concept ``normal form'', our results would not arrive at such a normal form except in the cases where \emph{all} nonresonant terms are being canceled.  See for example \cite{bonckaert-verstringe} for a similar situation where such simpler forms may converge to actual normal forms under extra diophantine conditions.  Likewise, under extra diophantine conditions, summable normal forms of \eqref{prenormalform} have been obtained in \cite{braaksma2007a} (and \cite{stolovitch1996a}).  Systems that are not 1-resonant are far more difficult to study, and only some cases have been dealt with; we refer for example to the 2-resonant vector field that occurs in the study of the Painlev\'e system, see \cite{bittmann2018a}.
For a more general overview on normal forms we refer to \cite{stolovitch2009a}.
}

The paper is organized as follows: In \secref{defn}, we first present our definitions and then subsequently state our main result \thmref{main1} on the normal form of \eqref{prenormalform}. In \secref{proof}, we prove \thmref{main1} by extending the procedure in \cite{bonckaert2008a} (for $k=1$) to the general case. We have tried to find a balance between self-containment and compactness, and often chose, for the sake of readability, to reprove some arguments rather than to merely call upon their similarity to those in \cite{bonckaert2008a}. The treatment of the non semi-simple case is the part where this paper diverges the most from \cite{bonckaert2008a}.  Note that by relaxing the conditions on $A$, our results also generalize the $k=1$ case.
In \secref{zeroHopf}, we study an application to a real analytic zero-Hopf. Finally, in \secref{discussion} we conclude the paper. Here we also point out a misleading interpretation of the results in \cite{bonckaert2008a} (stated as remark) and discuss potential future directions.
\medskip

{PDM acknowledges support from FWO project G0F1822N.}

\section{Definitions and statement of   the main result}\seclab{defn}

Let $\mathbb N_0 = \mathbb N\cup\{0\}$. In the following, we often consider $\mathbf j =(j_1,\ldots,j_n)\in \mathbb N_0^n$ and as usual define
\begin{align*}
 \vert \mathbf j\vert: =j_1+\cdots+j_n\ge 0,\quad \langle  \mathbf s, \mathbf m\rangle :=\sum_{i=1}^n s_i m_i\quad \forall\, \mathbf j\in \mathbb N_0^n,\,\forall\,  \mathbf s,\mathbf m\in \mathbb C^n.
\end{align*}
For $z\in \mathbb C^n$, we use the norm $\vert z\vert=\max\{\vert z_1\vert,\ldots,\vert z_n\vert\}$ and write 
\begin{align}
z^{\mathbf j}:=z_1^{j_1}\cdots z_n^{j_n}\quad \forall\, \mathbf j\in \mathbb N_0^n.\eqlab{zj}
\end{align} We also let $B^n(r)$ denote the ball in $\mathbb C^n$ centered at $z=0$ and of radius $r>0$. (We drop the superscript on $B^n$ in the case $n=1$.) Finally, we define $\mathcal A^{n,m}(r)$ as the set of analytic functions defined on $B^n(r)$ with values in $\mathbb C^m$.  We equip these spaces with the Banach norm:
\begin{align*}
 \Vert h\Vert := \sup_{z\in B^n(r)}\vert h(z)\vert\quad \forall\, h\in \mathcal A^{n,m}(r).
\end{align*}

Next, we consider formal series 
\begin{align}
 \widehat h(x,z) = \sum_{\ell=1}^\infty h_\ell(z)x^\ell\quad \mbox{with}\quad h_\ell\in \mathcal A^{n,m},\,\forall\, \ell\in \mathbb N_0;\eqlab{formalF}
\end{align}
notice that the sum starts with $\ell=1$ and consequently $\widehat h(0,\cdot)=0$. 
\begin{definition}
 $\widehat h(x,z)$ is Gevrey-$\frac{1}{k}$ with $k\in \mathbb N$ if there exists constants $K,T>0$ such that 
 \begin{align*}
  \Vert h_\ell\Vert\le KT^{\ell-1} \Gamma \left(\frac{\ell}{k}\right)\quad \forall\,\ell\in \mathbb N.
 \end{align*}
\end{definition}

In this paper, we define the Borel transform $\mathcal B_k$ (with respect to $x$, leaving $z$ fixed) of order $k$ as follows: 
Let $\widehat h$ be a formal series, see \eqref{formalF}. Then 
\begin{align}
 \mathcal B_k(\widehat h)(w,z): =\sum_{\ell=1}^\infty \frac{h_\ell(z)}{\Gamma\left(\frac{\ell}{k}\right)} w^{\ell-1}.\eqlab{borel}
\end{align}
It is clear that if $\widehat h$ is Gevrey-$\frac{1}{k}$ then $H:=\mathcal B_k(\widehat h)$ defines an absolutely convergent series in the disc $\vert w\vert<T^{-1}$:
\begin{align*}
 \vert H(w,z)\vert\le \sum_{\ell=1}^\infty \frac{\Vert h_\ell\Vert}{\Gamma\left(\frac{\ell}{k}\right)} w^{\ell-1}\le  \frac{K}{1-T\vert w\vert}.
\end{align*}
It follows that $H:B(T^{-1}) \times B^n(r)\rightarrow \mathbb C^m$ is an analytic function. 

Next, we turn to the definition of $k$-summability and the Laplace transform of order $k\in \mathbb N$: 
\begin{definition}\defnlab{ksum}
Suppose that $H=\mathcal B_k(\widehat h)$ can be analytically continued to $(w,y)\in S(\theta,\alpha)\times B^n(r)$, where $S(\theta,\alpha)\subset \mathbb C$ is the sector centered along the complex direction $\theta$ with opening $\alpha\in (0,\pi)$. Suppose additionally that $H$ has at \textit{most exponential growth of order $k\in\mathbb N$}, i.e. there is a $\mu>0$ and a constant $C>0$ such that 
\begin{align}
 \vert H(w,z)e^{-\mu^k \vert w\vert^k} \vert \le C\quad \forall\,w\in S(\theta,\alpha),\,z\in B^n(r).\eqlab{condHexpmu}
\end{align}
Then we say that $\widehat h$, see \eqref{formalF}, is \textnormal{$k$-summable in the direction $\theta$} and its \textnormal{Laplace transform of order $k$:}
\begin{align}
  \mathcal L_{\theta,k}(H)(x,z) = \int_{0}^{\theta \infty} H(w,z) e^{-w^k/x^k} kdw,\eqlab{laplacekHxy}
\end{align}
is its $k$-sum. 
\end{definition}
The integral in \eqref{laplacekHxy} can be analytically continued to \begin{align}\eqlab{omega0}x\in \omega_k(\nu,\theta,\alpha):=B(\nu)\cap S\left(\theta,\alpha+\pi/k\right),\end{align} for $\nu>0$ small enough, see \figref{omega}(b), \lemmaref{laplace} and \lemmaref{hxy} below for further details. 

We will define $k$-sums of $H:S(\theta,\alpha)\rightarrow \mathbb C$ (that are independent of $z$) and $H=(H_1,\ldots,H_n):S(\theta,\alpha)\times B^n(r)\rightarrow \mathbb  C^n$ completely analogously (in the latter case, when \defnref{ksum} applies to each component). 

\subsection{Main result}
We will describe our main result in terms of \eqref{prenormalform} under the following assumption: 
\begin{assumption}\assumplab{Adiag}
$A$ is in Jordan normal form:
\begin{align}
A = \Lambda +\Xi,\eqlab{Adiag}
\end{align} where 
\begin{enumerate}
\item $\Lambda  = \operatorname{diag}(\lambda_1,\ldots,\lambda_n)$, with all $\lambda_i$ being nonzero
\item $\Xi$ vanishes for $n=1$ and is nilpotent otherwise with all nonzero elements on the super-diagonal:
\begin{align*}
 (\Xi)_{i,j} =: \begin{cases}
                0 & \text{if}\quad j\ne i+1,\\
                \xi_{i} & \text{if}\quad j=i+1,
               \end{cases}\quad i,j\in \{1,\ldots,n\}.
\end{align*}
Here
\begin{align}
 \xi_{i}\in \{0,1\}\quad \forall\,i\in \{1,\ldots,n-1\}\quad \mbox{and}\quad \left(\xi_i=1, \,i\in \{1,\ldots,n-1\}\quad \Longrightarrow \quad \lambda_{i+1}=\lambda_i\right) .\eqlab{xiicond}
\end{align}
\end{enumerate}
\end{assumption}
The remaining assumptions of the following result are similar to those of \cite[Theorem 3]{bonckaert2008a}.
In particular, \eqref{condlambdak} replaces the condition stated after \cite[Eq. (9)]{bonckaert2008a}.
\begin{theorem} \thmlab{main1}
Consider \eqref{prenormalform}, satisfying \assumpref{Adiag} with $f$ being analytic near the origin, \rsp{and any $C>0$}.
\rsp{We then define $\mathcal R=\mathcal R(C)$ by }
  \begin{align}
\rsp{\mathcal R: = \left\{(i,\mathbf j)\in \{1,\ldots,n\}\times \mathbb N_0^n\,:\,\vert \lambda_i - \langle \mathbf j,\lambda\rangle \vert \le C(1+\vert \mathbf j\vert)\right\}.}
\eqlab{mathR0}
  \end{align}
  Moreover, let $\theta$ be a complex direction, so that the following holds for some $\xi>0$ small enough:
  \begin{align}
   \lambda_i -\langle \mathbf j,\lambda\rangle \notin S(k\theta,\xi)\quad \forall\,(i,\mathbf j)\,{\notin} \,\mathcal R,\eqlab{condlambdak}
  \end{align}
  Then there exist $R>0$, $\nu>0$ and an $\alpha>0$, all small enough, {and} two analytic functions $\phi=\phi(x,z)$ and $g=g(x,z)$ {with the following properties}:
  \begin{enumerate}
  \item[(i)]
  {$\phi$ and $g$} are $k$-sums of formal series $\widehat \phi$ and $\widehat g$, respectively;
  \item[(ii)] {$\phi$ and $g$}  are defined on $x\in \omega_k(\nu,\theta,\alpha)$, $z\in B^n(R)$ {(see \eqref{omega0})};
  \item[(iii)] {$\phi$ and $g$} extend continuously to the closure of the domain in (ii), 
\end{enumerate}
such that the following holds true:
  \begin{enumerate}
   \item ${g=(g_1,\ldots,g_n)}$ has a convergent power series expansion defined by $$ g_i(x,z)= \sum_{\mathbf j\,:\,(i,\mathbf j)\in \mathcal R}  g_{i,\mathbf j}(x) z^{\mathbf j},\quad x\in \omega_k(\nu,\theta,\alpha), \,z\in B^n(R),$$
  {with $i\in\{1,\ldots,n\}$}.
  \item The $x$-fibered diffeomorphism $(x,z)\mapsto (x,y)$ defined by \begin{align}
 y = z+x\phi (x,z),\eqlab{yz}
\end{align}
locally conjugates \eqref{prenormalform} with 
\begin{align}
 \dot{x} = \frac{1}{k}x^{k+1},\qquad\dot{z}= A z+ xg(x,z).\eqlab{finalnormalform}
\end{align}
\end{enumerate}
\end{theorem}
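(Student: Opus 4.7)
My plan is to follow the summability strategy hinted at in the introduction: derive a homological equation for $\phi$ and $g$, solve it formally with Gevrey-$1/k$ estimates, and then pass to the Borel plane of order $k$ to close a fixed-point argument in the Banach convolutional algebra described in the introduction.

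First I substitute $y = z + x\phi(x,z)$ together with the target dynamics $\dot z = Az + xg(x,z)$ into $\dot y = Ay + xf(x,y)$. Using $\dot x = \tfrac{1}{k}x^{k+1}$ and cancelling the common $Az$-term, division by $x$ yields the homological equation
\begin{equation*}
\mathcal{H}\phi + g = f(x,z+x\phi) - \tfrac{1}{k}x^k\bigl(\phi + x\partial_x\phi\bigr) - x(D_z\phi)\,g,
\end{equation*}
with $\mathcal{H}\phi := A\phi - (D_z\phi)(Az)$. Expanded on the basis $z^{\mathbf{j}} e_i$, the operator $\mathcal{H}$ has diagonal eigenvalues $\lambda_i - \langle\mathbf{j},\lambda\rangle$; the nilpotent summand $\Xi$ only contributes a finite upper-triangular coupling in the component index $i$ which is independent of $\mathbf{j}$, so it can be removed termwise by a finite Neumann series. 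Proceeding order by order in $x$, I split each Taylor coefficient in $z$ according to whether $(i,\mathbf{j})\in\mathcal{R}$, declare $g$ to absorb exactly the resonant contributions on the left-hand side, and determine $\phi$ on the complementary indices by inverting $\mathcal{H}$. The diagonal part of $\mathcal{H}^{-1}$ has norm bounded by $C^{-1}(1+|\mathbf{j}|)^{-1}$ by \eqref{mathR0}. A standard majorant argument using this $(1+|\mathbf{j}|)^{-1}$ gain, which precisely compensates the growth in $\ell$ caused by iteration, produces a unique formal $\widehat\phi$ of Gevrey-$1/k$ type, and correspondingly a formal $\widehat g$.

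For the summability statement I apply $\mathcal{B}_k$ to the homological equation in $x$, leaving $z$ fixed. The differential piece $x^{k+1}\partial_x$ becomes an explicit bounded multiplier in $w$, the shift $x\cdot$ becomes a mild shift/convolution, and the composition $f(x,z+x\phi)$ expands as a power series in the convolution products $\Phi^{\star_k n}$, all of which are controlled by the Banach-algebra property of $\star_k$. Inverting the diagonal part of $\mathcal{H}$ on the Borel side is a termwise scalar multiplication by $(\lambda_i - \langle\mathbf{j},\lambda\rangle)^{-1}$, which is a bounded operator because \eqref{mathR0} gives a uniform lower bound $C(1+|\mathbf{j}|)$. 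A contraction mapping argument on a small ball then produces a Borel sum $\Phi$, and $\mathcal{L}_{\theta,k}(\Phi)$ returns $\phi$ on $\omega_k(\nu,\theta,\alpha)$; the projection onto resonant indices returns $g$, whose $z$-coefficients are by construction convergent in $x$ rather than merely Gevrey, because resonance restricts each component of $g$ to finitely many monomial classes in $z$. Continuous extension to the closure of $\omega_k(\nu,\theta,\alpha)$ follows from the uniform norm bounds inherited through the Laplace integral.

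The main obstacle, in my view, is the rigorous verification that $\mathcal{H}^{-1}$ remains bounded on the convolutional algebra while preserving the exponential growth of order $k$ required by \eqref{condHexpmu} along the direction $\theta$. This is precisely where the direction condition \eqref{condlambdak} is essential: it keeps the multipliers $(\lambda_i - \langle\mathbf{j},\lambda\rangle)^{-1}$ uniformly away from the ray $e^{ik\theta}\mathbb{R}^+$ that governs the Laplace transform, so that the fixed point continues analytically across the full sector $S(\theta,\alpha)$ and the Laplace integral converges uniformly in $z\in B^n(R)$. Once these two estimates — the $(1+|\mathbf{j}|)^{-1}$-gain on the diagonal and the directional separation of the spectrum — are absorbed into the convolutional norm, the remaining steps (contraction, existence, analytic extension to $\omega_k(\nu,\theta,\alpha)$, and continuity up to the closure) are routine.
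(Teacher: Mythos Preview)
Your overall architecture matches the paper's, but there is a genuine gap in the Borel-plane inversion. You write that $x^{k+1}\partial_x$ becomes ``an explicit bounded multiplier in $w$'' and that inverting the diagonal part of $\mathcal H$ is multiplication by $(\lambda_i-\langle\mathbf j,\lambda\rangle)^{-1}$. Neither is correct as stated. Under $\mathcal B_k$, the operator $\tfrac{1}{k}x^{k+1}\partial_x$ becomes multiplication by $w^k$ (see \eqref{important}), which is \emph{unbounded} on $\mathcal G_k$; placing this term on the right-hand side destroys any hope of a contraction. In the paper the $w^k\Phi$ term is absorbed into the left-hand side \eqref{lhs}, so the multiplier actually being inverted is $(\lambda_i-\langle\mathbf j,\lambda\rangle-w^k)^{-1}$, see \eqref{LHSinv2}. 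This is precisely where \eqref{condlambdak} does its real work: it keeps $\lambda_i-\langle\mathbf j,\lambda\rangle$ out of the sector $S(k\theta,\xi)$, hence $\lambda_i-\langle\mathbf j,\lambda\rangle-w^k$ never vanishes for $w\in\Omega(\nu,\theta,\alpha)$, and together with \eqref{mathR0} one obtains the uniform estimate \eqref{lemma3est} on the whole Borel domain. Your final paragraph gestures in this direction, but frames the condition as an analytic-continuation afterthought rather than as the heart of the bounded linear inverse.

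A second gap concerns the nilpotent part. The operator $\mathcal H\phi=A\phi-(D_z\phi)(Az)$ contains not only $\Xi\phi$ (finite coupling in the component index $i$) but also $(D_z\phi)(\Xi z)$, which couples $z^{\mathbf j}$ to $z^{\mathbf j-\mathbf d_s}$ with coefficient $j_s\xi_s$. This coupling is in $\mathbf j$, not in $i$, and its coefficients grow linearly in $|\mathbf j|$, exactly cancelling the $(1+|\mathbf j|)^{-1}$ gain from the diagonal; a ``finite Neumann series'' does not close. The paper deals with this by first rescaling to $A=\Lambda+r\Xi$ with $r>0$ small (a harmless linear change of $z$-variables) and then solving the triangular-in-$\mathbf j$ recursion via an explicit path-sum formula (\lemmaref{Phinsol}), showing that the accumulated powers of $r$ beat the combinatorial growth. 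Finally, your claim that the $z$-coefficients of $g$ are convergent in $x$ is neither asserted by the theorem nor true in general: $g$ is only a $k$-sum, and $\mathcal R$ need not be finite in $\mathbf j$.
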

\begin{remark}\remlab{mathR0}
 \rsp{Notice that there is clearly a $C>0$ large enough such that 
 \begin{align*}
     \vert \lambda_i - \langle \mathbf j,\lambda\rangle \vert \le C(1+\vert \mathbf j\vert)\quad \forall\,(i,\textbf j)\in \{1,\ldots,n\}\times \mathbb N_0^n,
 \end{align*}
and therefore
 \begin{align*}
  \mathcal R = \{1,\ldots,n\}\times \mathbb N_0^n.
  \end{align*}
 However, in this case no terms are removed in \eqref{finalnormalform}. In practice, one is interested in applying the result with the "smallest possible" resonant set $\mathcal R$ (and a small $C$). }
\end{remark}
\begin{remark}
    {For $n=1$ with $A=\lambda\in \mathbb R\setminus \{0\}$, then 
    \begin{align*}
        \vert (j-1)\lambda\vert \ge \frac{1}{3}\vert \lambda\vert (1+j)\quad \forall\,j\in \mathbb N_0\setminus \{1\}.
    \end{align*}
    Consequently, we can take $\mathcal R =\{1\}\times \{1\}$ (corresponding to $j=1$) and $C=\frac13$ and for any $k\theta \notin \mathbb Z$, we then obtain the following normal form
    \begin{align*}
        \dot x &= \frac1k x^{k+1},\qquad  \dot z= (\lambda +x g_1(x))z.
    \end{align*}
It is subsequently possible to achieve $g_1(x)=\mu_{k}x^{k-1}$ if we allow division of the right hand side by a nonzero function, in agreement with \cite{martinet1983a}. 
This leads to the same normal form for saddle-nodes in dimension two by Martinet and Ramis \cite{martinet1983a}, upon which their analytic classification was based.}    \end{remark}

\section{Proof of \thmref{main1} } \seclab{proof}
To prove \thmref{main1}, we solve the conjugation equation for $g=g(x,z)$ and $\phi=\phi(x,z)$:
\begin{align}
  g - A\phi + \frac{\partial}{\partial z}\phi. Az  +\frac1k x^{k+1} \frac{\partial}{\partial x} \phi= f(x,z+x\phi )-x\frac{\partial}{\partial z}\phi .g-\frac1k x^k \phi ,
 \eqlab{conjeqn}
\end{align}
by setting up (following \cite{bonckaert2008a}) an associated equation for the Borel transforms (with respect to $x$, leaving $z$ fixed) $\Phi$ and $G$ of $\phi$ and $g$, respectively. (Notice that to obtain \eqref{conjeqn}, we simply differentiate \eqref{yz} with respect to $x$, use \eqref{prenormalform} and \eqref{finalnormalform} and rearrange.)
It is important to notice that \eqref{conjeqn} for $x=0$ reduces to:
\begin{align*}
g(0,z)-A\phi(0,z)+ \frac{\partial}{\partial z}\phi(0,z) Az = f(0,z),
\end{align*}
which
can be solved in the usual way by writing $g(0,z)=(g_{01}(z),\ldots,g_{0n}(z))$ and $\phi(0,z)$ as convergent power series in $z\in \mathbb C^n$, see also \cite[Lemma 10]{bonckaert2008a}; here $g_{0i}(z)$ only contain monomials $g_{0i\mathbf j}z^{\mathbf j}$ with $(i,\mathbf j)\in \mathcal R$ in agreement with the statement of \thmref{main1}. In this way, by performing a $z$-dependent shift of $g$ and $\phi$, we may in fact assume that $g(0,\cdot)=0$ and $\phi(0,\cdot)=0$ in \eqref{conjeqn}, see \cite[Lemma 10]{bonckaert2008a}, so that these functions do indeed have well-defined Borel transforms $G$ and $\Phi$. (Recall \eqref{borel}.)

To deal with the case where $\Xi\ne 0$, see \assumpref{Adiag}, $n\ge 2$, it will be important for us that
\begin{align}
    A=\Lambda +r\Xi,\eqlab{Ar}
\end{align}
for $r>0$. This form is obtained from \eqref{Adiag} by applying the scaling $z\mapsto (z_1,rz_2,\ldots,r^{n-1}z_n)$. We will prove \thmref{main1} by working in these coordinates with $r>0$ fixed small enough; this clearly does not change the statement of the theorem. It is important to note that $r>0$ will be fixed independently of the separate parameter $\mu>0$ that we introduce below.  

We present the equation for $\Phi$ and $G$ in Section \ref{conjbp} but first we extend some lemmas from \cite{bonckaert2008a} to the general case $k$. \thmref{main1} in the semi-simple case then follows the $k=1$ case of \cite{bonckaert2008a}. The details of the non-semi-simple case $\Xi\ne 0$ is treated in a separate section (section \ref{sec:nss}).

\subsection{The Borel plane and Laplace transforms}

Let
 \begin{align}
  \Omega = \Omega(\nu,\theta,\alpha) := B(\nu)\cup S(\theta,\alpha),\quad \nu>0,\,\alpha>0,\,\theta\in (0,2\pi),\eqlab{Omega}
 \end{align}
 see \figref{omega}.
We then first consider the space $\mathcal G_k=\mathcal G_k(\nu,\theta,\alpha,\mu)$ of analytic functions  $H:\Omega \rightarrow \mathbb C^n$ and equip this space with
the $\mu$-dependent Banach norm
\begin{align}
 \Vert H\Vert_{\mu,k}:=\sup_{w\in \Omega}\vert H(w)\vert \left(1+\mu^{2k} \vert w\vert^{2k}\right) e^{-\mu^k \vert w\vert^k},\quad k\in \mathbb N,\,\mu>0.\eqlab{Hnorm}
\end{align}

\begin{remark}
    Notice that in the case $k=1$, this norm agrees with the one in \cite[p. 312]{bonckaert2008a}.  
    One could define the norm for
general $k\in \mathbb N$ using the ramification operator $R_k(w) = w^{1/k}$. Notice in this regard that  \begin{align*}
   \dot x  =x^2\quad \Longleftarrow \quad \dot R_k(x) = \frac{1}{k}R_k(x)^{k+1}.
\end{align*}
But still the general case $k\in \mathbb N$ does not follow from \cite{bonckaert2008a}, since the set of functions $\{H \circ R_k : H \in \mathcal G_k\}$ is larger than $\mathcal G_1$. 
\end{remark}

In the following, we will take $\nu>0$ and $\alpha>0$ small and $\mu$ large enough. For this purpose, it is important to note that the following holds:
\begin{align*}
 \mathcal G_k(\nu,\theta,\alpha,\mu')\subset \mathcal G_k(\nu,\theta,\alpha,\mu)\quad \mbox{and}\quad \Vert H\Vert_{\mu',k}\le \Vert H\Vert_{\mu,k}\quad \forall\,\mu'\ge \mu,\,H\in \mathcal G_k(\nu,\theta,\alpha,\mu).
\end{align*}
This just follows from the monotonicity of $Q(s) =\left(1+s^2 \vert w\vert^{2k}\right) e^{-s \vert w\vert^k}$:
\begin{align*}
 Q'(s)=-\vert w\vert^k (1-s \vert w\vert^k)^2  e^{-s \vert w\vert^k}\le 0\quad \forall\, s\ge 0,\,\vert w\vert\ge 0.
\end{align*}

\begin{figure}[ht]
\begin{center}
\subfigure[$w\in \mathbb C$]{\includegraphics[width=.43\textwidth]{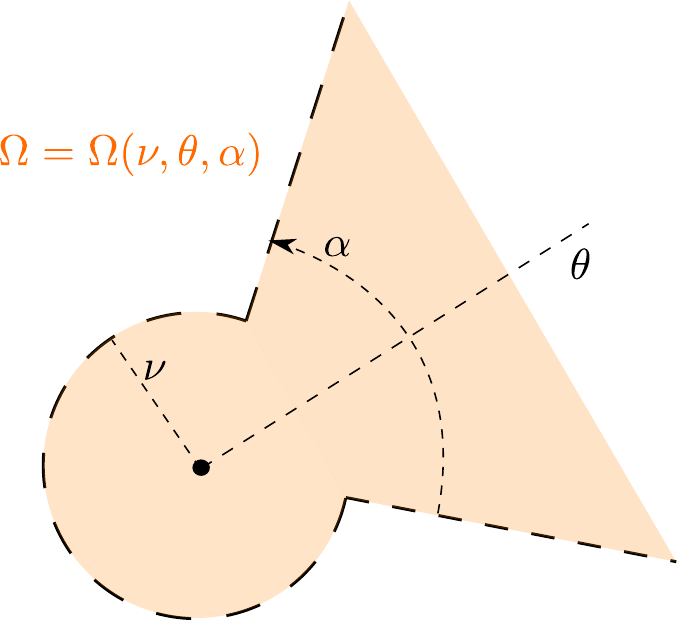}}
\subfigure[$x\in \mathbb C$]{\includegraphics[width=.45\textwidth]{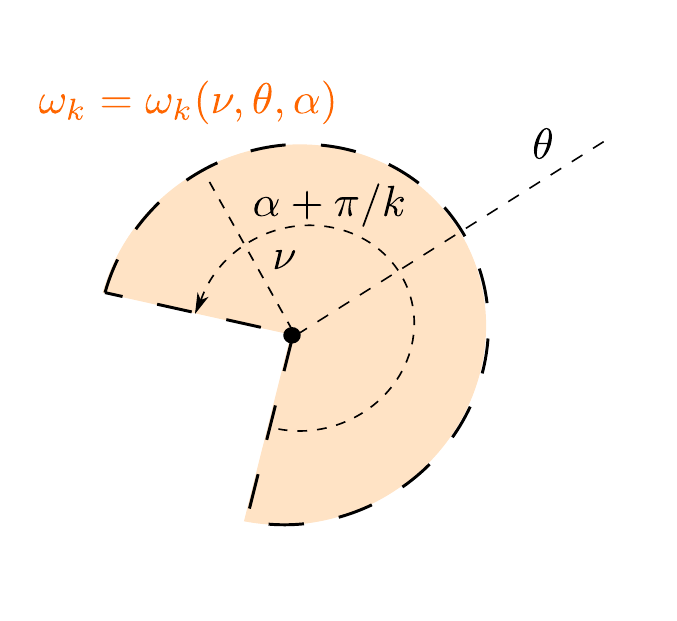}}
\end{center}
\caption{Illustrations of the different domains $\Omega$ and $\omega_k$. Fig. (a) is in the ``Borel plane'' $w\in \mathbb C$, whereas Fig. (b) is in the $x$-domain. Here $\omega_k$ is the complex domain upon which the Laplace transform of $\mathcal G_k$-functions on $\Omega$ are defined, see \lemmaref{laplace}. }
\figlab{omega}
\end{figure}

\begin{lemma}\lemmalab{laplace}
$\mathcal L_{\theta,k}$, given by
\begin{align}
 \mathcal L_{\theta,k}(H)(x) = \int_{0}^{\theta \infty} H(w) e^{-w^k/x^k} kdw,\eqlab{laplacekH}
\end{align}
defines a bounded linear operator from $\mathcal G_k$ to the space of analytic functions defined on the local sector: 
\begin{align*}
 \omega_k(\nu,\theta,\alpha):= B(\nu)\cap S(\theta,\alpha+\pi/k),
\end{align*}
recall \eqref{omega0},
for $0<\nu<\mu^{-1} \sqrt[k]{\sin \frac{k\alpha}{4}}$ small enough. In particular, the operator norm satisfies:
\begin{align*}
 \Vert \mathcal L_{\theta,k}\Vert \le \Gamma\left(\frac{1}{k}\right) \frac{\nu}{\sqrt[k]{ \sin \frac{k\alpha}{4}-\mu^k \nu^k}}.
\end{align*}

\end{lemma}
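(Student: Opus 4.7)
The plan is to prove three things in order: (i) absolute convergence of the integral defining $\mathcal{L}_{\theta,k}(H)(x)$ for each $x \in \omega_k(\nu,\theta,\alpha)$ and $H \in \mathcal{G}_k$; (ii) independence of the integration direction together with analyticity in $x$ via Cauchy's theorem, giving an analytic function on all of $\omega_k$; (iii) the quantitative operator-norm estimate.

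I would parametrise the contour as $w = te^{i\vartheta}$ with $t \ge 0$ and write $x = \rho e^{i\psi}$. Using the inequality $\vert H(w)\vert \le \Vert H\Vert_{\mu,k}\,(1 + \mu^{2k}\vert w\vert^{2k})^{-1}\,e^{\mu^k\vert w\vert^k}$ extracted from the norm definition \eqref{Hnorm}, the modulus of the integrand reads
\[
\Vert H\Vert_{\mu,k}\,(1+\mu^{2k}t^{2k})^{-1}\exp\!\big(\mu^k t^k - (t/\rho)^k \cos(k(\vartheta-\psi))\big)\cdot k,
\]
which is integrable whenever $\cos(k(\vartheta-\psi)) > \mu^k \rho^k$. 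Cauchy's theorem, applied by closing the contour with an arc of radius $R \to \infty$ (whose contribution vanishes by the same exponential estimate), shows that the value of the integral is independent of $\vartheta$ as long as $\vartheta$ stays in $(\theta - \alpha/2, \theta + \alpha/2)$ so that the ray lies in $S(\theta,\alpha) \subset \Omega$. The wedges of $x$ for which a given $\vartheta$ gives convergence overlap to cover $\omega_k$, and analyticity on $\omega_k$ then follows by differentiation under the integral sign with the same domination.

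For the operator-norm bound, given $x = \rho e^{i\psi} \in \omega_k$, I would select $\vartheta$ within $(\theta - \alpha/2, \theta + \alpha/2)$ such that $\vert \vartheta - \psi\vert$ is as small as possible: take $\vartheta = \theta$ when $\vert \psi - \theta\vert$ is small and push $\vartheta$ towards $\psi$ otherwise. Combined with $\rho < \nu < \mu^{-1}\sqrt[k]{\sin(k\alpha/4)}$ and a short case analysis exploiting the ``small enough'' clause, this choice delivers the uniform lower bound $\cos(k(\vartheta - \psi)) \ge \sin(k\alpha/4)$. Dropping the positive factor $(1 + \mu^{2k} t^{2k})^{-1}$ and applying the change of variables $u = \bigl(\sin(k\alpha/4) - \mu^k\rho^k\bigr)\rho^{-k}\, t^k$ converts the remaining integral into a standard Gamma integral, giving
\[
\vert \mathcal{L}_{\theta,k}(H)(x)\vert \le \Vert H\Vert_{\mu,k}\,\Gamma(1/k)\,\frac{\rho}{\sqrt[k]{\sin(k\alpha/4) - \mu^k \rho^k}} \le \Vert H\Vert_{\mu,k}\,\Gamma(1/k)\,\frac{\nu}{\sqrt[k]{\sin(k\alpha/4) - \mu^k \nu^k}},
\]
the last step by monotonicity of the upper bound as a function of $\rho \in [0,\nu]$. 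The main obstacle is the geometric bookkeeping in the choice of $\vartheta$ ensuring the uniform lower bound $\sin(k\alpha/4)$ across the full opening $\alpha + \pi/k$ of $\omega_k$; once this is in hand, the remaining estimates reduce to elementary calculus and a single application of Cauchy's theorem.
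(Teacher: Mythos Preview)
Your proposal is correct and follows essentially the same route as the paper's proof. The paper also chooses an auxiliary integration direction $\tilde\theta\in(\theta-\alpha/2,\theta+\alpha/2)$ with $x\in S(\tilde\theta,\pi/k-\alpha/2)$, so that $\vert\tilde\theta-\operatorname{Arg}(x)\vert<\tfrac{\pi}{2k}-\tfrac{\alpha}{4}$ and hence $\cos(k(\tilde\theta-\operatorname{Arg}(x)))\ge\sin\tfrac{k\alpha}{4}$; it then drops the factor $(1+\mu^{2k}\vert w\vert^{2k})^{-1}$, evaluates the resulting Gamma integral, and invokes contour deformation exactly as you do.
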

\begin{proof}
 The proof follows \cite[Proposition 3]{bonckaert2008a}: Given $x\in \omega_k$, so that $x\in S(\theta,\alpha+\pi/k)$ and $\vert x\vert \le \nu$, see \figref{omega}, it is clear that there is a $\tilde \theta \in (\theta-\alpha/2,\theta+\alpha/2)$ so that $x\in S(\tilde \theta,\pi/k-\alpha/2)$. Then
 \begin{align*}
  \vert (\mathcal L_{\tilde \theta,k})(H))(x)\vert &\le \Vert H\Vert_{\mu,k}\int_0^{\tilde \theta \infty} e^{\mu^k \vert w\vert^k} e^{-\text{Re}(w^k/x^k)}k \vert dw\vert\\
  &\le \Vert H\Vert_{\mu,k}\int_0^{\infty} e^{\mu^k t^k} e^{-t^k/\vert x\vert^k \cos( k(\tilde \theta -\operatorname{Arg}(x))) } k dt
 \end{align*}
 We have $\vert\tilde \theta -\operatorname{Arg}(x)\vert <\frac{\pi}{2k}-\frac{\alpha}{4}$ and therefore 
\begin{align*}
 \cos( k(\tilde \theta -\operatorname{Arg}(x))) \ge \sin \frac{k\alpha}{4}. 
\end{align*}
Consequently, for any $\vert x\vert <\nu< \mu^{-1} \sqrt[k]{\sin \frac{k\alpha}{4}}$ we have
\begin{align*}
 \int_0^{\infty} e^{\mu t^k} e^{-t^k/\vert x\vert^k \cos( k(\tilde \theta -\operatorname{Arg}(x))) } k dt&\le \int_0^{\infty} e^{\mu^k t^k} e^{-t^k/  \vert x\vert^k \sin\frac{k\alpha}{4} } kdt\\
 &= \Gamma\left(\frac{1}{k}\right) \frac{\vert x\vert}{\sqrt[k]{\sin\frac{k\alpha}{4}-\mu^k \vert x\vert^k}}\\
 &= \Gamma\left(\frac{1}{k}\right) \frac{\nu}{\sqrt[k]{\sin\frac{k\alpha}{4}-\mu^k \nu^k}}.
\end{align*}
This proves the statement since $\theta\rightarrow \tilde \theta$ corresponds to a deformation of the complex path integral (used to analytically continue $\mathcal L_{\theta,k}(H)$).
\end{proof}

We now define the convolution operator of order $k$ as
 \begin{align}
 (H\star_k J)(w) := w \int_0^{1} \mathbb K(s)H(w(1-s)^{1/k}) J(ws^{1/k}) ds,\eqlab{HJconv2}
 \end{align}
with the kernel \[\mathbb K(s):= (s(1-s))^{\frac{1-k}{k}},\]

\begin{remark}
    For $k=1$, it is the traditional convolution which is known to work well with the Laplace transform of order $k=1$: the convolution of two functions transforms to the product of the transforms of the two functions.  The expression above arises after imposing the following commutation rule at order $k$:
    \[
        \widetilde{(H \star_k J)}\circ R_k = (\widetilde{H} \circ R_k) \star_1 (\widetilde{J} \circ R_k)
    \]
    where $\tilde{\cdot}$ means a multiplication with a function $w\mapsto w^{1-k}$ and where $R_k$ is the ramification operator $R_k(w)=w^{1/k}$.
\end{remark}

\begin{lemma}\lemmalab{conv0}
For any $H,J\in \mathcal G_k$, we have that
 \begin{align}
  \mathcal L_{\theta,k} (H\star_k J) = (\mathcal L_{\theta,k}H) (\mathcal L_{\theta,k}J),\eqlab{product}
 \end{align}
and 
\begin{align}
 \frac{1}{k} x^{k+1}\frac{d}{dx}\mathcal L_{\theta,k}(H)(x) = \mathcal L_{\theta,k}((\cdot )^kH)(x),\eqlab{important}
\end{align}
with $(\cdot )^kH$ denoting the function $w\mapsto w^kH(w)$. 
\end{lemma}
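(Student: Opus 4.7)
First I would prove the differentiation identity \eqref{important}. Since $\frac{d}{dx} e^{-w^k/x^k} = \frac{k w^k}{x^{k+1}}e^{-w^k/x^k}$, formal differentiation under the integral in \eqref{laplacekH} gives
\[
\frac{d}{dx}\mathcal L_{\theta,k}(H)(x) = \frac{k}{x^{k+1}}\int_0^{\theta\infty} w^k H(w)\, e^{-w^k/x^k}\, k\, dw,
\]
which rearranges into \eqref{important}. The interchange is legitimate because $w\mapsto w^k H(w)$ still belongs to $\mathcal G_k$ (the polynomial factor $w^k$ is easily absorbed into the exponential weight in the norm \eqref{Hnorm}), so \lemmaref{laplace} applied to $(\cdot)^k H$ supplies the integrable majorant needed for differentiation under the integral sign.

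Next I would prove the convolution identity \eqref{product} by writing the product of the two Laplace transforms as an iterated integral,
\[
(\mathcal L_{\theta,k}H)(x)\,(\mathcal L_{\theta,k}J)(x) = k^2 \int_0^{\theta\infty}\!\!\int_0^{\theta\infty} H(u)J(v)\, e^{-(u^k+v^k)/x^k}\, du\, dv,
\]
and performing the substitution $u = w(1-s)^{1/k}$, $v = ws^{1/k}$ with $w\in(0,\theta\infty)$ and $s\in [0,1]$. This is the natural choice: it collapses the diagonal sum to $u^k+v^k=w^k$ and keeps $u,v$ on the ray of angle $\theta$, since $(1-s)^{1/k}$ and $s^{1/k}$ are positive real. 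A direct Jacobian computation yields
\[
du\, dv = \tfrac{w}{k}\,(s(1-s))^{1/k-1}\, dw\, ds = \tfrac{w}{k}\,\mathbb K(s)\, dw\, ds,
\]
so that the double integral collapses to
\[
\int_0^{\theta\infty} k\,e^{-w^k/x^k}\Bigl[w\int_0^1 \mathbb K(s)\,H(w(1-s)^{1/k})\,J(ws^{1/k})\, ds\Bigr] dw = \mathcal L_{\theta,k}(H\star_k J)(x),
\]
by the definition \eqref{HJconv2} of $\star_k$.

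The main obstacle is the rigorous justification of Fubini and of the change of variables. Absolute convergence of the double integral relies on the $\mu^k$-exponential bound built into the norm \eqref{Hnorm} combined with the real-part estimate $\mathrm{Re}(w^k/x^k)\ge \sin(k\alpha/4)\,|w|^k/|x|^k$ derived in the proof of \lemmaref{laplace}; this forces joint decay in $(|u|,|v|)$ as soon as $\mu^k |x|^k < \sin(k\alpha/4)$, i.e. in the Laplace domain of \lemmaref{laplace}. A secondary subtlety is the singularity of $\mathbb K$ at $s=0,1$ when $k\ge 2$: its integrability holds because $1/k-1>-1$ for every $k\in \mathbb N$, which is exactly what makes the weight $\mathbb K$ admissible. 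Finally, one should verify that the arguments $w(1-s)^{1/k}$ and $ws^{1/k}$ lie inside the domain $\Omega$ of \eqref{Omega} whenever $w$ does; this is immediate because $\Omega$ is star-shaped from the origin, so the entire segment $[0,w]$ is contained in $\Omega$.
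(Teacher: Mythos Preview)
Your proposal is correct and follows essentially the same route as the paper: both prove \eqref{important} by differentiating under the integral sign (the paper simply calls this ``a straightforward calculation, using the absolute convergence of $\mathcal L_{\theta,k}$''), and both prove \eqref{product} by writing the product of transforms as a double integral and applying the substitution $u=w(1-s)^{1/k}$, $v=ws^{1/k}$ with Jacobian $\tfrac{w}{k}\mathbb K(s)$. The only cosmetic difference is that the paper first reduces to $\theta=0$ and phrases the justification via truncated regions $[0,L]^2$ (then passing to the limit) rather than invoking Fubini directly as you do; your additional remarks on the integrability of $\mathbb K$ and the star-shapedness of $\Omega$ are welcome refinements that the paper leaves implicit.
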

 \begin{proof}
  The proof of \eqref{important} follows from a straightforward calculation, using the absolute convergence of $\mathcal L_{\theta,k}$. For \eqref{product}, we proceed as in the standard proof for the classical convolution: Firstly, it is without loss of generality to take $\theta=0$. We then consider the right hand side 
  \begin{align*}
   (\mathcal L_{\theta,k}H)(x) (\mathcal L_{\theta,k}J)(x) &=  \lim_{L\rightarrow \infty} \int \int_{0\le w,u\le L}k^2  H(w)J(u)e^{-(w^k+u^k)/x^k} dwdu\\
   &=\lim_{L\rightarrow \infty} \int \int_{w,u\ge 0,\,0\le w+u\le L} k^2 H(w)J(u)e^{-(w^k+u^k)/x^k} dwdu,
  \end{align*}
 using that $H$ and $J$ belong to $\mathcal G_k$ in the second equality. We then use the substitution:
 \begin{align*}
  w = p(1-s)^{1/k},\quad u=p s^{1/k},
 \end{align*}
so that $w^k+u^k = p^k$. The Jacobian of this transformation is given by 
\begin{align*}
\frac{1}{k} p\mathbb K(s).
\end{align*}
Consequently, \begin{align*}
   (\mathcal L_{\theta,k}H)(x) (\mathcal L_{\theta,k}J)(x) 
   &=\lim_{L\rightarrow \infty} \int_0^L k e^{-p^k/x^k} (H\star_k J)(p) dp,
  \end{align*}
  as desired.
 \end{proof}

\begin{lemma} \lemmalab{conv}
The convolution $\star_k$ given by \eqref{HJconv2} defines a continuous bilinear operator $\mathcal G_k\times \mathcal G_k\rightarrow \mathcal G_k$ for all $\mu>0$, $k\in \mathbb N$. In particular, for each $k\in \mathbb N$ define 
 \begin{align*}
 q_k := \rspp{\frac{2^{\frac{4k-1}{k}}  \pi}{2\cos\left(\frac{\pi(k-1) }{2k}\right) }}.
 \end{align*}
 Then $q_k>0$ and for any $H,J\in \mathcal G_k$, we have the following estimate
 \begin{align*}
  \Vert H\star_k J\Vert_{\mu,k} \le \frac{q_k}{\mu}\Vert H\Vert_{\mu,k} \Vert J\Vert_{\mu,k}.
 \end{align*} 
\end{lemma}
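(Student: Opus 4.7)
The plan is to carry out a direct pointwise estimate of $|(H\star_k J)(w)|$, reducing the lemma to a scalar one-variable inequality that can be computed in closed form. I would start from the definition~\eqref{HJconv2}, take moduli, and substitute the elementary consequence of the norm definition, $|H(\zeta)|\le \Vert H\Vert_{\mu,k}\,(1+\mu^{2k}|\zeta|^{2k})^{-1}e^{\mu^k|\zeta|^k}$, applied at $\zeta=w(1-s)^{1/k}$ and analogously for $J$ at $\zeta=ws^{1/k}$. The key structural feature is that $|w(1-s)^{1/k}|^k+|ws^{1/k}|^k=|w|^k$, so the two exponentials combine to exactly $e^{\mu^k|w|^k}$, which is precisely the factor divided out in the definition of $\Vert\cdot\Vert_{\mu,k}$. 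Star-shapedness of $\Omega$ with respect to the origin guarantees that both evaluation points stay in $\Omega$, and $L^1$-integrability of $\mathbb{K}$ on $(0,1)$ (which holds for every $k\ge 1$) makes the integral well-defined and ensures that $H\star_k J$ inherits analyticity on $\Omega$.

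Multiplying by the norm weight $(1+\mu^{2k}|w|^{2k})$, taking the supremum over $w\in\Omega$, and setting $t=\mu^k|w|^k$ (so that $|w|=t^{1/k}/\mu$), the whole estimate reduces to the $\mu$-independent scalar inequality
\begin{equation*}
\sup_{t\ge 0}\ t^{1/k}(1+t^2)\int_0^1\frac{(s(1-s))^{(1-k)/k}}{(1+t^2s^2)(1+t^2(1-s)^2)}\,ds\ \le\ q_k,
\end{equation*}
and the factor $1/\mu$ on the right-hand side of the lemma emerges directly from the relation $|w|=t^{1/k}/\mu$.

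To establish the scalar inequality, I would use the symmetry $s\mapsto 1-s$ to write the integral as twice the integral on $[0,1/2]$, apply the elementary bounds $(1-s)^{(1-k)/k}\le 2^{(k-1)/k}$ (valid on $[0,1/2]$ since the exponent is $\le 0$ for $k\ge 1$) and $1+t^2(1-s)^2\ge 1+t^2/4$, and estimate the resulting one-variable integral by extending the range to $[0,\infty)$ and substituting $u=ts$, leading to the classical Mellin-type evaluation
\begin{equation*}
\int_0^{\infty}\frac{u^{(1-k)/k}}{1+u^2}\,du\ =\ \frac{\pi}{2\sin(\pi/(2k))}\ =\ \frac{\pi}{2\cos(\pi(k-1)/(2k))}.
\end{equation*}
Combining this with $\sup_{t\ge 0}(1+t^2)/(1+t^2/4)=4$ and collecting powers of $2$, namely $2\cdot 2^{(k-1)/k}\cdot 4=2^{(4k-1)/k}$, recovers precisely the stated constant $q_k$.

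The main obstacle is the sharpness of the scalar estimate rather than any abstract Banach-space argument: the splitting point $s=1/2$, the endpoint domination of $(1-s)^{(1-k)/k}$, and the classical Mellin integral must line up exactly to reproduce $q_k=2^{(4k-1)/k}\pi/(2\cos(\pi(k-1)/(2k)))$. The singularity of $\mathbb{K}(s)$ at the endpoints for $k>1$ is what forces the substitution $u=ts$, which trades the endpoint singularity at $s=0$ for the decay of $(1+u^2)^{-1}$ at $u=\infty$, and this is ultimately what accounts for the sine/cosine factor in the denominator of $q_k$.
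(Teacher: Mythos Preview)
Your proposal is correct and follows essentially the same route as the paper's proof: the same pointwise bound using the norm definition, the same symmetry reduction to $[0,\tfrac12]$, the same endpoint bound $(1-s)^{(1-k)/k}\le 2^{(k-1)/k}$ and lower bound $1+t^2(1-s)^2\ge \tfrac14(1+t^2)$, and the same substitution leading to the Mellin integral $\int_0^\infty u^{(1-k)/k}/(1+u^2)\,du$. The only cosmetic difference is that you introduce $t=\mu^k|w|^k$ at the outset, whereas the paper carries $\mu$ and $|w|$ through and substitutes $u=\mu^k|w|^k s$ at the end.
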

\begin{proof}
We have
 \begin{align*}
  \vert (H\star_k J)(w)\vert &\le \Vert H\Vert_{\mu,k} \Vert J\Vert_{\mu,k}  \int_0^{1} \mathbb K(s) \frac{\vert w\vert e^{\mu^k \vert w\vert^k}}{(1+\mu^{2k} \vert w\vert^{2k}(1-s)^2)(1+\mu^{2k} \vert w\vert^{2k}s^2)}ds\\
  & \le  \Vert H\Vert_{\mu,k} \Vert J\Vert_{\mu,k} \frac{ e^{\mu^k \vert w\vert^k}}{1+\mu^{2k} \vert w\vert^{2k}}  \left[8\int_0^{\frac12} \mathbb K(s) \frac{\vert w\vert}{1+\mu^{2k} \vert w\vert^{2k}s^2}ds\right].
 \end{align*}
 In the second inequality, we have used a symmetry argument, based upon 
 \begin{align*}
  \mathbb K(s)=\mathbb K(1-s)\quad \forall\, s\in [0,1],
 \end{align*}
 and the fact that $$1+\mu^{2k}\vert w\vert^{2k} (1-s)^2 > \frac14 (1+\mu^{2k} \vert w\vert^{2k})\quad \forall\, s\in [\frac12,1],$$ following \cite[Proposition 4]{bonckaert2008a}. We now estimate the remaining integral. For this we first use that $\mathbb K(s)\le 2^{\frac{k-1}{k}} s^{\frac{1-k}{k}}$ for $s\in [0,\frac12]$ so that
 \begin{align*}
 \int_0^{\frac12} \mathbb K(s) \frac{\vert w\vert}{1+\mu^{2k} \vert w\vert^{2k}s^2}ds &\le 2^{\frac{k-1}{k}}  \int_0^{\frac12} s^{\frac{1-k}{k}} \frac{\vert w\vert }{1+\mu^{2k} \vert w\vert^{2k}s^2}ds\\
 &\rspp{\le \frac{2^{\frac{k-1}{k}} }{\mu} \int_0^\infty \frac{u^{\frac{1-k}{k}} }{1+u^2}du}\\
 &\rspp{= \frac{2^{\frac{k-1}{k}} }{\mu}\frac{\pi}{2\cos\left(\frac{\pi(k-1) }{2k}\right) }}.
\end{align*}
\rspp{Here we have also used the substitution $u=\mu^k \vert w\vert^k s$ in the second inequality.}
%
Combining these estimates, we find that
\begin{align*}
 \Vert H\star_k J\Vert_{\mu,k} &\le \Vert H\Vert_{\mu,k}\Vert J \Vert_{\mu,k}\left[8\int_0^{\frac12} \mathbb K(s) \frac{\vert w\vert}{1+\mu^{2k} \vert w\vert^{2k}s^2}ds\right] \\
 &\le  \Vert H\Vert_{\mu,k}\Vert J \Vert_{\mu,k} \left[\rspp{\frac{1}{\mu}\frac{ 2^{\frac{4k-1}{k}}  \pi}{2\cos\left(\frac{\pi(k-1) }{2k}\right) }}\right]\\
 &:=\Vert H\Vert_{\mu,k}\Vert J \Vert_{\mu,k}\frac{q_k}{\mu},
\end{align*}
as desired.
\end{proof}
\begin{remark}
    For $k=1$ we have \rspp{$q_1=4\pi$ in agreement with the estimate in} \cite[Proposition 4]{bonckaert2008a}.
\end{remark}
\begin{remark}\label{rm:balser}
    It is for having a convenient proof of the bilinearity of the convolution operator that we have preferred not to work with the Laplace transform of \cite{balser2000a}: there the convolution is a bit more involved, even at order 1:
    \[
    (H \star J)(w) = \frac{d}{dw}\int_0^w H(s)J(w-s)ds.
    \]
    The need for computing a derivative is an extra obstruction in proving boundedness.
\end{remark}

We now turn to the Borel transform. First we need an auxiliary result.
\begin{lemma}\lemmalab{funch}
The function $H:\mathbb C\rightarrow \mathbb C$ defined by $$H(w) = \sum_{m=1}^\infty \frac{w^{m-1}}{\Gamma\left(\frac{m}{k}\right)}, \quad w\in \mathbb C,$$ is entire and belongs to $\mathcal G_k$ for any $\mu> 1$: 
\begin{align*}
 \Vert H\Vert_{\mu,k}\le C_k,\quad C_k>0;
\end{align*}
specifically, if $\mu>\sqrt[k]{2}$ then we have the following explicit form of the constant $C_k$:
\begin{align}
 \frac{C_k}{k}=384  e^{-2 - \frac{\sqrt{15}}{2}}(4 + \sqrt{15})^2 \approx 465.%
\eqlab{Cmu}
\end{align}
 \end{lemma}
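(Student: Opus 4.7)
The plan is to reduce the two-variable sup to a one-variable positive estimate and then optimize. First, entireness of $H$ follows from Stirling: $\Gamma(m/k)^{1/m}\to\infty$ yields an infinite radius of convergence for the series. Second, since all Taylor coefficients of $H$ at the origin are nonnegative, $|H(w)|\le\tilde H(|w|)$ where $\tilde H(r):=\sum_{m\ge 1}r^{m-1}/\Gamma(m/k)$, so it suffices to estimate $\tilde H(r)(1+\mu^{2k}r^{2k})e^{-\mu^k r^k}$ uniformly in $r\ge 0$. Moreover, by the monotonicity of the norm in $\mu$ noted just above the lemma, in order to obtain a bound valid for every $\mu>\sqrt[k]{2}$ it suffices to treat the single value $\mu=\sqrt[k]{2}$, i.e.\ $\mu^k=2$.

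The core step is to establish the pointwise bound $\tilde H(r)\le 6k(1+4u^2)e^u$ for all $r\ge 0$, where $u:=r^k$. To this end I would regroup the series by the residue class of $m$ modulo $k$: writing $m=jk+l$ with $l\in\{1,\ldots,k\}$ and $j\ge 0$,
\[
\tilde H(r)=\sum_{l=1}^{k}r^{l-1}E_l(u),\qquad E_l(u):=\sum_{j\ge 0}\frac{u^j}{\Gamma(j+l/k)}.
\]
For $l=k$ the inner sum is exactly $e^u$. For general $l$ I would use that $\Gamma$ is increasing on $[2,\infty)$ and so $\Gamma(j+l/k)\ge \Gamma(j)=(j-1)!$ for $j\ge 2$, together with $\Gamma(l/k)\ge 1$ (since $\Gamma\ge 1$ on $(0,1]$) and $\Gamma(1+l/k)\ge\min_{x>0}\Gamma(x)\approx 0.886$ for the two remaining terms $j=0,1$. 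The tail then sums as a shifted exponential, giving $E_l(u)\le C_0(1+u)e^u$ with an explicit $C_0$. Combining with $\sum_{l=1}^{k}r^{l-1}\le k(1+r^{k-1})\le 2k(1+u)$ (using $r^{k-1}=u^{1-1/k}\le 1+u$) and the elementary inequality $(1+u)^2\le 2(1+4u^2)$ (by completing the square, the difference equals $7(u-\tfrac17)^2+\tfrac{6}{7}\ge 0$) yields the desired form $\tilde H(r)\le 6k(1+4u^2)e^u$ with room to spare.

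Substituting back at $\mu^k=2$ reduces the norm estimate to the one-variable problem
\[
\|H\|_{\sqrt[k]{2},k}\le 6k\,\sup_{u\ge 0}(1+4u^2)^2 e^{-u}.
\]
A standard calculation locates the maximum: differentiating the logarithm gives the critical equation $4u^2-16u+1=0$, with positive root $u^\star=(4+\sqrt{15})/2$. At that point $1+4u^{\star 2}=16u^\star$, hence the supremum equals $(16u^\star)^2 e^{-u^\star}=64(4+\sqrt{15})^2 e^{-2-\sqrt{15}/2}$, and multiplying by $6k$ recovers exactly $C_k/k=384(4+\sqrt{15})^2 e^{-2-\sqrt{15}/2}$. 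The main technical obstacle is purely bookkeeping: the chain of inequalities must be kept tight enough to land on the constant $6$ rather than something larger, and in particular the small gamma values $\Gamma(l/k)$ and $\Gamma(1+l/k)$ must be handled uniformly in $l$; beyond that, every individual step is an elementary calculation.
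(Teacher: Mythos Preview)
Your argument is correct, and the final one-variable optimization is identical to the paper's after the substitution $s=2u$. The route to the pointwise bound on $\tilde H$, however, differs from the paper's.

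The paper splits the series at $m=2k$ into a finite polynomial part (bounded using $\min_{t>0}\Gamma(t)>\tfrac12$) plus a tail; the tail is handled by the monotonicity of $\Gamma$ on $[2,\infty)$ together with a separate case distinction $|w|\lessgtr\tfrac12$, arriving at $|H(w)|\le 6k(1+|w|^{2k})e^{|w|^k}$. It then keeps $\mu>1$ general, uses $(1+u^2)\le(1+\mu^{2k}u^2)$, and only at the very end invokes $\mu^k>2$ to pass to $\sup_s e^{-s/2}(1+s^2)^2$. Your regrouping by residue class modulo $k$ is cleaner and avoids the case split in $|w|$; it actually yields the sharper intermediate constant $4k$ rather than $6k$ (your $E_l(u)\le(1+u)e^u$ holds with $C_0=1$, since $1+1.13u\le e^u$), so ``room to spare'' is accurate. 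Fixing $\mu^k=2$ via the monotonicity of the norm is a nice shortcut that the paper does not take.

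Two small points. First, the statement also asserts finiteness of $\Vert H\Vert_{\mu,k}$ for all $\mu>1$, not only $\mu>\sqrt[k]{2}$; this follows immediately from your $\mu$-independent bound $\tilde H(r)\le 4k(1+4u^2)e^u$ inserted into the norm, but you should say so. Second, you invoke monotonicity to reduce to $\mu=\sqrt[k]{2}$, whereas the lemma literally asks for $\mu>\sqrt[k]{2}$; since your computation is valid at the endpoint and the norm is nonincreasing in $\mu$, this is harmless.
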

 \begin{proof} 
We write 
\begin{align}
H(w) = \sum_{m=1}^{2k-1} \frac{w^{m-1}}{\Gamma\left(\frac{m}{k}\right)}+\sum_{m=2k}^\infty \frac{w^{m-1}}{\Gamma\left(\frac{m}{k}\right)}.\eqlab{Hsplit}
\end{align}
The finite sum can be estimated as
\begin{align*}
\sum_{m=1}^{2k-1} \frac{\vert w\vert^{m-1}}{\Gamma\left(\frac{m}{k}\right)} \le 2 (2k-1)\max_{m\in \{1,\ldots, 2k+1\}} \vert w\vert^{m-1} < 4k(1+\vert w\vert^{2k}), 
\end{align*}
since $\frac12 < \text{min}_{t>0} \Gamma(t)\approx 0.9$ the minimum of the $\Gamma$-function. 
For the second term, 
we will use that $\Gamma(t)$ is increasing for $t\ge 2$: 
  \begin{align*}
   \Gamma\left(\frac{k\ell+m}{k}\right) \ge \Gamma\left(\ell+1\right) =\ell!
  \end{align*}
    for any $m\ge k$, $\ell\in \mathbb N$. For $\vert w\vert\le \frac{1}{2}$, the second term in \eqref{Hsplit} is clearly bounded by the geometric series $\sum_{m=2k}^{\infty} 2^{1-m}=2^{2(1-k)}\le 1$.
Next, for $\vert w\vert \ge \frac{1}{2}$, we estimate:
  \begin{align}
  \left| \sum_{m=2k}^\infty \frac{w^{m-1}}{\Gamma\left(\frac{m}{k}\right)}\right| &\le \sum_{\ell=2}^\infty \sum_{m=0}^{k-1} \frac{\vert w\vert^{k\ell+m-1}}{\Gamma\left(\frac{k\ell+m}{k}\right)}\nonumber\\
  &\le \sum_{\ell=0}^\infty \frac{\vert w\vert ^{k\ell}}{\ell!} \sum_{m=0}^{k-1} \vert w\vert^{m-1}\nonumber\\
  &\le  \sum_{\ell=0}^\infty \frac{\vert w\vert ^{k\ell}}{\ell!}  \vert w\vert^{-1} k\max_{m\in \{0,\ldots,2k\}}\vert w\vert^{m}\nonumber \\
    &\le 2 k e^{\vert w\vert^k}\left(1+\vert w\vert^{2k}\right),\eqlab{herest}
   \end{align}
   using in the last inequality that $\vert w\vert \ge \frac{1}{2}$. In turn, using the estimate for $\vert w\vert\le \frac{1}{2}$, we find that the estimate \eqref{herest} in fact holds for all $w\in \mathbb C$.  In this way, we obtain the following bound on $H$:
   \begin{align*}
    \vert H(w)\vert \le 6k(1+\vert w\vert^{2k})e^{\vert w\vert^k},
   \end{align*}
valid for all $w\in \mathbb C$. Therefore for all $\mu>1$
\begin{align*}
 \Vert H\Vert_{\mu,k}&\le 6k \sup_{r\ge 0} (1+r^2)e^{(1-\mu^k)r} \left(1+\mu^{2k}r^2\right)\\
 &\le 6 k \sup_{r\ge 0} e^{(1-\mu^k)r} \left(1+\mu^{2k} r^2\right)^2,
\end{align*}
which is clearly bounded. Specifically for $\frac12 \mu^k>1 \Leftrightarrow \mu >\sqrt[k]{2}$, a simple calculation gives the following
\begin{align*}
 e^{(1-\mu^k) r}\left(1+\mu^{2k} r^2\right)^2\le e^{-\frac12 \mu^k r}\left(1+\mu^{2k} r^2\right)^2 \le 64 e^{-2 - \frac{\sqrt{15}}{2}}(4 + \sqrt{15})^2,
\end{align*}
for all $r\ge 0$. Here we have used that 
\begin{align*}
 p(s):=e^{-\frac12 s}\left(1+s^2\right)^2\le \max_{s\ge 0}p(s) = p(s_0)\quad \forall\,s\ge 0 
\end{align*}
where $s_0:=4+\sqrt{15}$. This gives the desired result and \eqref{Cmu}.

 \end{proof}

\begin{lemma}\lemmalab{boundBk}
 Consider $$h(x)=\sum_{m=1}^\infty h_m x^m,$$
 with 
 $$\vert h_m\vert \le K T^{m-1} \quad \forall\,m\in \mathbb N,$$
 for some $K,T>0$, such that $h:B(T^{-1})\rightarrow \mathbb C$ is analytic. Then 
 \begin{align*}
 \mathcal B_k(h) = \sum_{m=0}^\infty \frac{h_m}{\Gamma\left(\frac{m}{k}\right)}(\cdot)^{m-1},
 \end{align*}
 belongs to $\mathcal G_k$ with norm
 \begin{align*}
  \Vert \mathcal B_k (h)\Vert_{\mu,k}\le  K C_k\quad \forall\, \mu>T,
 \end{align*}
 for some $C_k$;
 specifically, we may take $C_k>0$ to be given by \eqref{Cmu} whenever $\mu>\sqrt[k]{2} T$.
\end{lemma}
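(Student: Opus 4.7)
The plan is to reduce the estimate to the one already obtained in \lemmaref{funch} by means of a simple rescaling of the Borel variable. First, I would use the hypothesis $|h_m|\le KT^{m-1}$ termwise in the defining series of $\mathcal B_k(h)$ to get
\[
 |\mathcal B_k(h)(w)| \le \sum_{m=1}^{\infty} \frac{|h_m|}{\Gamma(m/k)}|w|^{m-1} \le K \sum_{m=1}^{\infty}\frac{(T|w|)^{m-1}}{\Gamma(m/k)} = K\,H(Tw),
\]
where $H$ is the entire function introduced in \lemmaref{funch}. In particular, $\mathcal B_k(h)$ is an absolutely convergent series on all of $\mathbb C$ (since $H$ is entire), so it defines an entire function and in particular an analytic function on $\Omega$.

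Next I would relate the norm of $\mathcal B_k(h)$ to the norm of $H$ via the substitution $v=Tw$. Precisely, since $H$ is entire, the supremum appearing in $\Vert H\Vert_{\mu',k}$ can be taken over all of $\mathbb C$ (this is what the proof of \lemmaref{funch} actually delivers). Then
\[
 \Vert \mathcal B_k(h)\Vert_{\mu,k}
 \le K\sup_{w\in\Omega} |H(Tw)|\bigl(1+\mu^{2k}|w|^{2k}\bigr)e^{-\mu^k|w|^k}
 = K\sup_{v\in T\Omega} |H(v)|\bigl(1+(\mu/T)^{2k}|v|^{2k}\bigr)e^{-(\mu/T)^k|v|^k},
\]
and bounding the last supremum by the one over $v\in\mathbb C$ gives $\Vert \mathcal B_k(h)\Vert_{\mu,k}\le K\Vert H\Vert_{\mu/T,k}$ (the entire-function version of the norm).

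Finally, I would apply \lemmaref{funch}: whenever $\mu/T>1$, i.e.\ $\mu>T$, we get $\Vert H\Vert_{\mu/T,k}\le C_k$ for some constant $C_k>0$, which yields the general bound $\Vert\mathcal B_k(h)\Vert_{\mu,k}\le KC_k$. The sharper threshold $\mu>\sqrt[k]{2}\,T$ corresponds exactly to $\mu/T>\sqrt[k]{2}$, which is the regime in which \lemmaref{funch} supplies the explicit constant \eqref{Cmu}; substituting that constant completes the proof. There is no genuine obstacle here: the only mildly delicate point is making sure that the rescaling argument is allowed, which works precisely because $H$ is entire and hence the Gevrey-type weighted supremum norm is insensitive to the enlargement of $\Omega$ into $T\Omega$.
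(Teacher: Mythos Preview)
Your proposal is correct and is exactly the argument the paper has in mind: the paper's proof consists of the single sentence ``The result easily follows from \lemmaref{funch}'', and your rescaling $v=Tw$ together with the termwise bound $|h_m|\le KT^{m-1}$ is precisely how one makes that sentence explicit. One small notational slip: the termwise estimate yields $|\mathcal B_k(h)(w)|\le K\,H(T|w|)$ rather than $K\,H(Tw)$, but since $H$ has nonnegative coefficients this is immaterial for the subsequent norm bound.
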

\begin{proof}
The result easily follows from \lemmaref{funch}. 
\end{proof}
Under the conditions of the lemma, we also have that 
\begin{align}
 \left(\mathcal L_{\theta,k}\circ \mathcal B_k\right) (h) = h.\eqlab{LB}
\end{align}
This follows from \eqref{laplacekH} and \eqref{borel} and the absolute convergence of $\mathcal L_{\theta,k}$. 

As in \cite{bonckaert2008a}, we define $\mathcal G_k^n$ completely analogously as the space of analytic functions $H:\Omega \rightarrow \mathbb C^n$, $H=(H_1,\ldots,H_n)$, with each component $H_l\in \mathcal G_k$, $\ell\in\{1,\ldots,n\}$. The following norm
\begin{align}
 \Vert H \Vert_{\mu,k}:=\sup_{\ell\in \{1,\ldots,n\}} \Vert H_l\Vert_{\mu,k},\eqlab{HnormGn}
\end{align}
is a Banach norm on $\mathcal G_k^n$ and all of the results above extend with little effort to these generalized spaces.

Let $H=(H_1,\ldots,H_n)\in \mathcal G_k^n$ and $\mathbf j\in \mathbb N_0^n\setminus \{\mathbf 0\}$. Then we will also define $H^{\star_k \mathbf j}$ in the obvious way:
\begin{align*}
 H^{\star_k \mathbf j} = \underbrace{H_1 \star_k \cdots \star_k H_1 }_{\mbox{$j_1$ times}}\star_k \cdots \star_k \underbrace{H_l \star_k \cdots \star_k H_l }_{\mbox{$j_\ell$ times}}\star_k \cdots \star_k \underbrace{H_n \star_k \cdots \star_k H_n }_{\mbox{$j_n$ times}}.
\end{align*}
In this way, we have
\begin{align*}
 \mathcal L_{\theta,k} \left(H^{\star_k \mathbf j} \right) = \mathcal L_{\theta,k}(H_1)^{j_1}\times \cdots \times \mathcal L_{\theta,k}(H_n)^{j_n} := \mathcal L_{\theta,k}(H)^{\mathbf j}, 
\end{align*}
as desired, see \lemmaref{laplace} and recall the notation introduced in \eqref{zj}.

\subsection{\texorpdfstring{Definition of the space $\mathcal G_k^n\{z\}$}{Definition of the space Gkn\{z\}}}
The space $\mathcal G_k^n\{z\}$ consists of uniformly convergent series in $z$ with coefficients in $\mathcal G_k^n$:
\begin{align*}
 H(w,z) = \sum_{\mathbf j\in \mathbb N_0^n} H_{\mathbf j}(w) z^{\mathbf j},\quad H_{\mathbf j} \in \mathcal G_k^n,\,\forall\,\mathbf j\in \mathbb N_0^n,
\end{align*}
and we equip this space with the Banach norm:
\begin{align*}
 \Vert H\Vert_{\mu,k} := \sum_{\mathbf j\in \mathbb N_0^n} \Vert H_{\mathbf j}\Vert_{\mu,k} \mu^{-\vert \mathbf j\vert}\quad \mbox{with $\Vert H_{\mathbf j}\Vert_{\mu,k}$ given by \eqref{HnormGn}},
\end{align*}
see \cite[Section 6.2]{bonckaert2008a}.
Analytic functions $h(x,z)$ with $h(0,\cdot)=0$ have Borel transforms (with respect to $x$, leaving $z$ fixed) that belong to $\mathcal G^n_{\mu,k}\{z\}$ for $\mu>0$ large enough. In particular, we have the following (which is a simple consequence of \lemmaref{boundBk}, see also \cite[Lemma 7]{bonckaert2008a}).
\begin{lemma}\lemmalab{hxy}
 Consider
 \begin{align*}
  h(x,z)  = \sum_{\mathbf j\in \mathbb N_0^n} \sum_{\ell=1}^\infty h_{\ell\mathbf j} x^\ell z^{\mathbf j},
 \end{align*}
a convergent series with coefficients $h_{\ell\mathbf j}\in \mathbb C^n$, $\ell\in \mathbb N$, $\mathbf j\in \mathbb N_0^n$, and suppose that 
\begin{align*}
 \vert h_{\ell\mathbf j}\vert \le K T^{\ell-1+\vert \mathbf j\vert}\quad \forall\,\ell\in \mathbb N, \mathbf j\in \mathbb N_0^n.
\end{align*}
Then the Borel transform of $h$ (with respect to $x$, leaving $z$ fixed)
\begin{align*}
 \mathcal B_k(h)(w,z):=\sum_{\mathbf j\in \mathbb N_0^n} \left(\sum_{\ell=1}^\infty \frac{h_{\ell\mathbf j}}{\Gamma\left(\frac{\ell}{k}\right) } w^{\ell-1}\right) z^{\mathbf j}
\end{align*}
belongs to $\mathcal G^{n}_{\mu,k}\{z\}$ for all $\mu>T$, and 
\begin{align*}
 \Vert \mathcal B_k(h)\Vert_{\mu,k} \le 2^n K C_k,
\end{align*}
for some $C_k>0$. Furthermore, $$(\mathcal L_{\theta,k} \circ \mathcal B_{k})(h)(x,z)=h(x,z) \quad \forall \, x\in \omega_k(\nu,\theta,\alpha),\,z\in B^n(\mu^{-1}),$$
recall \eqref{omega0}, where $0<\nu<\mu^{-1} \sqrt[k]{\sin \frac{k\alpha}{4}}$. 
\end{lemma}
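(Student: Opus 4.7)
The plan is to reduce the statement to Lemma~\ref{lemma:boundBk} by organising $h$ as a power series in $z$ with coefficients in the space of analytic functions of $x$, apply the one-variable Borel bound componentwise, and then sum over multi-indices using the geometric series. Concretely, for each $\mathbf j \in \mathbb N_0^n$ set
\[
  h_{\mathbf j}(x) := \sum_{\ell=1}^\infty h_{\ell\mathbf j}\,x^\ell,
\]
so that $h(x,z) = \sum_{\mathbf j} h_{\mathbf j}(x)\, z^{\mathbf j}$. The hypothesis rewrites as $|h_{\ell\mathbf j}| \le (KT^{|\mathbf j|})\,T^{\ell-1}$, so $h_{\mathbf j}$ fits into the framework of Lemma~\ref{lemma:boundBk} with constant $K' = K T^{|\mathbf j|}$. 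Hence for any $\mu > T$ the Borel transform $H_{\mathbf j} := \mathcal B_k(h_{\mathbf j})$ belongs to $\mathcal G_k^n$ and satisfies $\|H_{\mathbf j}\|_{\mu,k} \le C_k K T^{|\mathbf j|}$.

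Summing in the norm of $\mathcal G_k^n\{z\}$ gives
\[
  \|\mathcal B_k(h)\|_{\mu,k}
  = \sum_{\mathbf j \in \mathbb N_0^n} \|H_{\mathbf j}\|_{\mu,k}\,\mu^{-|\mathbf j|}
  \le C_k K \sum_{\mathbf j \in \mathbb N_0^n} (T/\mu)^{|\mathbf j|}
  = C_k K \bigl(1 - T/\mu\bigr)^{-n},
\]
which, upon restricting $\mu$ so that $T/\mu \le 1/2$ (which is compatible with the standing assumption $\mu > T$ after possibly increasing $\mu$), yields the bound $2^n K C_k$. In particular the series defining $\mathcal B_k(h)$ converges absolutely in the $\mathcal G^n_{\mu,k}\{z\}$-norm, so $\mathcal B_k(h) \in \mathcal G_{\mu,k}^n\{z\}$.

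For the inversion identity, fix $x \in \omega_k(\nu,\theta,\alpha)$ and $z \in B^n(\mu^{-1})$. By Lemma~\ref{lemma:laplace}, $\mathcal L_{\theta,k}$ is bounded from $\mathcal G_k^n$ into analytic functions on $\omega_k(\nu,\theta,\alpha)$, so the uniform-in-$\mathbf j$ bound $\|H_{\mathbf j}\|_{\mu,k} \le C_k K T^{|\mathbf j|}$ together with the weights $\mu^{-|\mathbf j|}$ and $|z|^{|\mathbf j|}<\mu^{-|\mathbf j|}$ allows us to interchange $\mathcal L_{\theta,k}$ with the sum over $\mathbf j$:
\[
  \mathcal L_{\theta,k}(\mathcal B_k(h))(x,z)
  = \sum_{\mathbf j \in \mathbb N_0^n} \mathcal L_{\theta,k}(H_{\mathbf j})(x)\,z^{\mathbf j}
  = \sum_{\mathbf j \in \mathbb N_0^n} h_{\mathbf j}(x)\,z^{\mathbf j}
  = h(x,z),
\]
where the middle equality invokes \eqref{eq:LB} applied to each coefficient $h_{\mathbf j}$.

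I expect the main (modest) obstacle to be the bookkeeping that ensures the constants are uniform across $\mathbf j$: the one-variable bound of Lemma~\ref{lemma:boundBk} must be applied with a single $C_k$ independent of $\mathbf j$, which is fine because the required condition $\mu > T$ (respectively $\mu > \sqrt[k]{2}T$ for the explicit constant) does not depend on $\mathbf j$. The only other point requiring care is legitimising the exchange of Laplace transform and summation on $B^n(\mu^{-1})$, but this follows from absolute convergence in $\mathcal G_{\mu,k}^n\{z\}$ together with the operator bound in Lemma~\ref{lemma:laplace}, so no deeper analysis is needed.
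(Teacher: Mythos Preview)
Your argument is correct and is precisely the route the paper intends: it states the lemma as ``a simple consequence of \lemmaref{boundBk}'' and leaves the details implicit, which are exactly the ones you supply (apply \lemmaref{boundBk} with $K'=KT^{|\mathbf j|}$, sum the resulting geometric series in the $\mathcal G_k^n\{z\}$-norm, and invoke \eqref{eq:LB} term-by-term for the inversion). The only cosmetic point is that the clean constant $2^n K C_k$ requires $\mu\ge 2T$ rather than merely $\mu>T$, as you already observe; this matches the paper's own looseness about $C_k$.
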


The final lemma deals with the Borel transform of terms of the form $h(x,z+x\phi (x,z))$ which appear in \eqref{conjeqn}. 
\begin{lemma}\lemmalab{hxycompose}
Consider the same assumptions as in \lemmaref{hxy}. Fix any $\xi>0$ and define 
\begin{align*}
 h^\star(\Psi)(w,z):=\sum_{\mathbf j\in \mathbb N_0^n} \left(\sum_{\ell=1}^\infty \frac{h_{\ell\mathbf j}}{\Gamma\left(\frac{\ell}{k}\right) } w^{\ell-1}\right) \star_k (z+\Psi(w,z))^{\star_k \mathbf j},\quad \Psi\in \mathcal G_k^n\{z\}.
\end{align*}
Then the following holds: $h^\star(\Psi)$ is well-defined for all $\Vert \Psi\Vert_{\mu,k}\le \xi$ for all $\mu\gg 1$, satisfying $\Vert h^\star(\Psi)\Vert_{\mu,k}\le 2^n KC_k$ and 
\begin{align*}
 \mathcal L_{\theta,k}(h^\star(\Psi))(x,z) = h(x,z+\mathcal L_{\theta,k}(\Psi)(x,z)).
\end{align*}
\end{lemma}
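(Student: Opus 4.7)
The plan is to reduce the statement to \lemmaref{boundBk}, \lemmaref{conv} and \lemmaref{conv0}, with the $z$-expansion in $\mathcal G_k^n\{z\}$ handled via a multi-index geometric series. First I would fix $\mathbf j\in\mathbb N_0^n$ and set $A_{\mathbf j}(w):=\sum_{\ell\ge 1}\frac{h_{\ell\mathbf j}}{\Gamma(\ell/k)}w^{\ell-1}$. From the hypothesis $\vert h_{\ell\mathbf j}\vert\le (KT^{\vert\mathbf j\vert})T^{\ell-1}$, \lemmaref{boundBk} yields $\Vert A_{\mathbf j}\Vert_{\mu,k}\le C_kKT^{\vert\mathbf j\vert}$ for every $\mu>T$. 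Regarding $z_i\in\mathcal G_k^n\{z\}$ as the monomial $z^{\mathbf e_i}$ with constant-in-$w$ coefficient $1\in\mathcal G_k$ (of $\mathcal G_k$-norm $1$), the definition of the weighted norm gives $\Vert z_i\Vert_{\mu,k}=\mu^{-1}$, so $\Vert z_i+\Psi_i\Vert_{\mu,k}\le \mu^{-1}+\xi$.

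Second, the bilinearity estimate of \lemmaref{conv} extends verbatim from $\mathcal G_k^n$ to $\mathcal G_k^n\{z\}$ thanks to the multiplicativity of the weight $\mu^{-\vert\mathbf j\vert}$ under Cauchy products, so $\star_k$ is bounded on $\mathcal G_k^n\{z\}$ with the same constant $q_k/\mu$. Iterating this gives
\[
\Vert (z+\Psi)^{\star_k\mathbf j}\Vert_{\mu,k}\le\left(\frac{q_k}{\mu}\right)^{\vert\mathbf j\vert-1}(\mu^{-1}+\xi)^{\vert\mathbf j\vert}
\]
for $\vert\mathbf j\vert\ge 1$ (and $A_{\mathbf 0}$ alone for the $\mathbf j=\mathbf 0$ term, under the convention that $(z+\Psi)^{\star_k\mathbf 0}$ acts as the convolution identity). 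One further application of \lemmaref{conv} followed by summation over $\mathbf j$ produces
\[
\Vert h^\star(\Psi)\Vert_{\mu,k}\le C_kK\sum_{\mathbf j\in\mathbb N_0^n}\rho(\mu)^{\vert\mathbf j\vert},\qquad \rho(\mu):=\frac{q_kT(\mu^{-1}+\xi)}{\mu}.
\]
Choosing $\mu$ so large that $\rho(\mu)\le 1/2$, the right-hand side factors as a product of $n$ geometric series each bounded by $2$, giving well-definedness in $\mathcal G_k^n\{z\}$ together with the claimed bound $\Vert h^\star(\Psi)\Vert_{\mu,k}\le 2^nKC_k$.

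Finally, for the Laplace identity the uniform convergence above lets me exchange $\mathcal L_{\theta,k}$ with the sum over $\mathbf j$. Termwise, the product rule of \lemmaref{conv0} combined with $\mathcal L_{\theta,k}(A_{\mathbf j})(x)=\sum_{\ell\ge 1}h_{\ell\mathbf j}x^\ell$ (immediate from \eqref{Lmon0}) and the fact that $z$-monomials pass through $\mathcal L_{\theta,k}$ and $\star_k$ inertly in the $\mathcal G_k^n\{z\}$ formalism yields
\[
\mathcal L_{\theta,k}\!\left(A_{\mathbf j}\star_k(z+\Psi)^{\star_k\mathbf j}\right)(x,z)=\left(\sum_{\ell\ge 1}h_{\ell\mathbf j}x^\ell\right)\bigl(z+\mathcal L_{\theta,k}(\Psi)(x,z)\bigr)^{\mathbf j},
\]
and resumming in $\mathbf j$ recovers the Taylor expansion of $h(x,z+\mathcal L_{\theta,k}(\Psi)(x,z))$. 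The main obstacle I anticipate is not analytic but the book-keeping step of verifying that $\star_k$ on $\mathcal G_k^n\{z\}$ retains the bilinearity constant $q_k/\mu$ and that the $z$-variables behave inertly under both $\star_k$ and $\mathcal L_{\theta,k}$; once these formal conventions are pinned down, the remainder is a direct multi-index transcription of the three lemmas cited above.
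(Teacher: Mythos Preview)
Your outline is the intended one (the paper simply cites \cite[Proposition 5]{bonckaert2008a} and says the proof follows from \lemmaref{conv0}, \lemmaref{conv} and \lemmaref{hxy}), but there is a genuine inconsistency in your handling of $z$ that you correctly flag as ``book-keeping'' yet do not resolve. You treat $z_i$ as the constant-in-$w$ function $1\in\mathcal G_k$ (giving $\Vert z_i\Vert_{\mu,k}=\mu^{-1}$), and at the same time you assert that $z$-monomials pass through $\mathcal L_{\theta,k}$ inertly. These two conventions are incompatible: $\mathcal L_{\theta,k}(1)=x\Gamma(1/k)$, so with your interpretation $\mathcal L_{\theta,k}\bigl((z+\Psi)^{\star_k\mathbf j}\bigr)=\bigl(zx\Gamma(1/k)+\mathcal L_{\theta,k}(\Psi)\bigr)^{\mathbf j}$, and the claimed Laplace identity fails.

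The correct reading is that $z$ acts as a formal parameter (a convolution identity), so that
\[
A_{\mathbf j}\star_k (z+\Psi)^{\star_k\mathbf j}
=\sum_{\mathbf 0\le\mathbf m\le\mathbf j}\binom{\mathbf j}{\mathbf m}\,z^{\mathbf j-\mathbf m}\bigl(A_{\mathbf j}\star_k\Psi^{\star_k\mathbf m}\bigr),
\]
with only the $\Psi_i$ factors actually convolved with $A_{\mathbf j}$; the Laplace identity is then immediate from \lemmaref{conv0}. This also forces a correction in your norm estimate: the $\mathbf m=\mathbf 0$ term contributes $\mu^{-\vert\mathbf j\vert}\Vert A_{\mathbf j}\Vert_{\mu,k}$ with \emph{no} factor $(q_k/\mu)^{\vert\mathbf j\vert}$, so your displayed bound $(q_k/\mu)^{\vert\mathbf j\vert-1}(\mu^{-1}+\xi)^{\vert\mathbf j\vert}$ is not an upper bound for the true object once $q_k/\mu<1$. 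Redoing the multinomial sum gives
\[
\bigl\Vert A_{\mathbf j}\star_k(z+\Psi)^{\star_k\mathbf j}\bigr\Vert_{\mu,k}
\le \Vert A_{\mathbf j}\Vert_{\mu,k}\Bigl(\mu^{-1}+\tfrac{q_k\xi}{\mu}\Bigr)^{\vert\mathbf j\vert},
\]
hence $\rho(\mu)=T(1+q_k\xi)/\mu$ rather than your $q_kT(\mu^{-1}+\xi)/\mu$. Since this corrected $\rho$ is still $\le 1/2$ for $\mu\ge 2T(1+q_k\xi)$, the geometric-series argument and the bound $2^nKC_k$ go through unchanged.
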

\begin{proof}Given \lemmaref{conv0}, \lemmaref{conv} and \lemmaref{hxy}, the proof of \lemmaref{hxycompose} follows from \cite[Proposition 5]{bonckaert2008a} and further details are therefore left out.
\end{proof}

\subsection{The conjugacy equation in the Borel plane}\label{conjbp}
We are now in position to formulate \eqref{conjeqn} in the Borel plane for the Borel transforms $G$ and $\Phi$ of $g$ and $\phi$. For the left hand side, we use \lemmaref{conv0} (in particular \eqref{product} and \eqref{important}) and obtain
\begin{align}
 \operatorname{LHS}(G,\Phi)(w,z):= G(w,z) -A\Phi(w,z) + \frac{\partial }{\partial z}\Phi(w,z) A z + w^k \Phi(w,z),\eqlab{lhs}
\end{align}
whereas for the Borel transform of the right hand side of \eqref{conjeqn}, we use \lemmaref{hxycompose} for the first  term and \eqref{product} on the remaining terms:
\begin{align}
 \operatorname{RHS}(G,\Phi)(w,z):=f^\star(1\star_k \Phi)(w,z) -1\star_k \frac{\partial}{\partial z}\Phi(w,z)\star_k G(w,z)-\frac{1}{k}w^{k-1} \star_k \Phi(w,z).\eqlab{rhs}
\end{align}
We solve the associated equation $\operatorname{LHS}=\operatorname{RHS}$ as in \cite{bonckaert2008a} through a fixed-point argument, that envolves a particular inversion of $\operatorname{LHS}$: 

Consider the auxiliary problem
\begin{align}
 \operatorname{LHS}(G,\Phi)=H\in \mathcal G_k^n\{z\},\eqlab{aux}
\end{align}
with $\operatorname{LHS}$ given by \eqref{lhs} and $H(w,z)=\sum_{\mathbf j\in \mathbb N_0} H_{\mathbf j}(w) z^{\mathbf j}$, $H_{\mathbf j}=(H_{1\mathbf j},\ldots,H_{n\mathbf j})\in \mathcal G_k^n$ fixed. 
We write 
\begin{equation}\eqlab{GPhiexpand}
\begin{aligned}
G=\sum_{\mathbf j\in \mathbb N_0^n} G_{\mathbf j}(w)z^{\mathbf j}\in \mathcal G_k^n\{z\},\quad &\Phi= \sum_{\mathbf j\in \mathbb N_0^n} \Phi_{\mathbf j}(w)z^{\mathbf j}\in \mathcal G_k^n\{z\},\\
G_{\mathbf j}=(G_{1\mathbf j},\ldots,G_{n\mathbf j})\in \mathcal G_k^n,\quad &\Phi_{\mathbf j}=(\Phi_{1\mathbf j},\ldots,\Phi_{n\mathbf j})\in \mathcal G_k^n.
\end{aligned} 
\end{equation}
 Our particular inverse $\operatorname{LHS}^{-1}:H\mapsto (G,\Phi)$ is then defined by
\begin{align}
G_{i\mathbf j} = 0 \quad \forall\,(i,\mathbf j)\notin \mathcal R\quad \mbox{and}\quad \Phi_{i\mathbf j}=0\quad \forall\,(i,\mathbf j)\in \mathcal R.\eqlab{cond0}
\end{align}
For the following lemma, we will need a final Banach space:
Let $\mathcal G_{k,1}^n\{z\}\subset \mathcal G_k^n\{z\}$ denote the subspace consisting of functions $\Phi\in \mathcal G_k^n\{z\}$ such that also
\begin{align*}
 \frac{\partial}{\partial z_i} \Phi \in \mathcal G_k^n\{z\},\quad \forall\,i\in\{1,\ldots,n\},
\end{align*}
and equip this space with the Banach norm
\begin{align*}
 \Vert \Phi\Vert_{\mu,k,1}:=\max \left\{\Vert \Phi\Vert_{\mu,k},\mu^{-1} \Vert \frac{\partial}{\partial z_1} \Phi\Vert_{\mu,k},\ldots, \mu^{-1} \Vert \frac{\partial}{\partial z_n} \Phi\Vert_{\mu,k}\right\}.
\end{align*}
\begin{lemma}\lemmalab{LHSinv}
Under the assumptions of \thmref{main1}, there exists an $r>0$ small enough, recall \eqref{Ar}, such that the linear mapping
 \begin{align*}
  \operatorname{LHS}^{-1}\,:\,\mathcal G_k^n \{z\}\rightarrow \mathcal G_k^n \{z\} \times \mathcal G_{k,1}^n \{z\} ,\quad  H\mapsto (G,\Phi),
  \end{align*}
  defined by \eqref{aux} and \eqref{cond0}, is uniformly bounded with respect to $\mu>0$. 
\end{lemma}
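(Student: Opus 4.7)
My plan is first to reduce the general Jordan case to the semi-simple one by a Neumann-series perturbation in the small parameter $r>0$ of \eqref{Ar}. Decomposing $A=\Lambda+r\Xi$ splits the operator as $\operatorname{LHS}=\operatorname{LHS}_0+r\operatorname{LHS}_1$, where $\operatorname{LHS}_0$ corresponds to the diagonal part and
\[
    \operatorname{LHS}_1(\Phi):=-\Xi\Phi+\frac{\partial\Phi}{\partial z}\,\Xi z
\]
depends only on $\Phi$. A direct inspection of the two norms shows that multiplication by any $z_{l+1}$ contributes a factor $\mu^{-1}$ while $\partial_{z_l}$ costs a factor $\mu$ as an operator $\mathcal G_{k,1}^n\{z\}\to\mathcal G_k^n\{z\}$, so $\operatorname{LHS}_1:\mathcal G_{k,1}^n\{z\}\to\mathcal G_k^n\{z\}$ has operator norm bounded by a constant independent of $\mu$. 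Once $\operatorname{LHS}_0^{-1}$ is shown to be $\mu$-uniformly bounded, I would resolve
\[
    \Phi=\pi_\Phi\operatorname{LHS}_0^{-1}(H)-r\,\pi_\Phi\operatorname{LHS}_0^{-1}\operatorname{LHS}_1(\Phi)
\]
on $\mathcal G_{k,1}^n\{z\}$ by a Neumann series, provided $r$ is chosen small enough, independently of $\mu$, and recover $G$ from $(G,\Phi)=\operatorname{LHS}_0^{-1}(H-r\operatorname{LHS}_1(\Phi))$.

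It then remains to invert $\operatorname{LHS}_0$. Substituting \eqref{GPhiexpand} into \eqref{lhs} with $A=\Lambda$ uncouples the equation into the scalar family
\[
    G_{i\mathbf j}(w)-\bigl(\lambda_i-\langle\mathbf j,\lambda\rangle-w^k\bigr)\Phi_{i\mathbf j}(w)=H_{i\mathbf j}(w),
\]
which I resolve via \eqref{cond0} by setting $(G_{i\mathbf j},\Phi_{i\mathbf j})=(H_{i\mathbf j},0)$ on $\mathcal R$ and $(G_{i\mathbf j},\Phi_{i\mathbf j})=(0,-H_{i\mathbf j}/(\lambda_i-\langle\mathbf j,\lambda\rangle-w^k))$ on its complement. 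The heart of the argument is the uniform denominator bound
\[
    \bigl|\lambda_i-\langle\mathbf j,\lambda\rangle-w^k\bigr|\ge c\bigl(1+|\mathbf j|+|w|^k\bigr),\qquad w\in\Omega,\ (i,\mathbf j)\notin\mathcal R,
\]
with $c>0$ independent of $\mathbf j$ and $w$. Choosing $\alpha$ so small that $\{w^k:w\in S(\theta,\alpha)\}\subset S(k\theta,\xi/2)$, the sector-avoidance condition \eqref{condlambdak} separates $\lambda_i-\langle\mathbf j,\lambda\rangle$ from $w^k$ by a definite angle and yields $|\lambda_i-\langle\mathbf j,\lambda\rangle-w^k|\ge c'(|\lambda_i-\langle\mathbf j,\lambda\rangle|+|w|^k)$; for the remaining portion $w\in B(\nu)$ a triangle inequality suffices once $\nu$ is chosen small enough. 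Combining with the diophantine-type bound \eqref{mathR0} then gives the claim. This I expect to be the main technical obstacle, since the estimate has to hold simultaneously for all $(i,\mathbf j)\notin\mathcal R$ and for all $w$ in the composite domain $\Omega=B(\nu)\cup S(\theta,\alpha)$.

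The pointwise bound transfers immediately to norms: $\|G_{i\mathbf j}\|_{\mu,k}\le\|H_{i\mathbf j}\|_{\mu,k}$ and $\|\Phi_{i\mathbf j}\|_{\mu,k}\le c^{-1}(1+|\mathbf j|)^{-1}\|H_{i\mathbf j}\|_{\mu,k}$. Summing with the $\mu^{-|\mathbf j|}$-weights that define the $\mathcal G_k^n\{z\}$-norm gives $\|G\|_{\mu,k}+\|\Phi\|_{\mu,k}\le C\|H\|_{\mu,k}$ with $C$ independent of $\mu$. The coefficient of $z^{\mathbf m}$ in $\partial_{z_l}\Phi$ equals $(m_l+1)\Phi_{\cdot,\mathbf m+e_l}$, whose $\mathcal G_k^n$-norm is bounded by $c^{-1}(m_l+1)(1+|\mathbf m+e_l|)^{-1}\|H_{\mathbf m+e_l}\|_{\mu,k}\le c^{-1}\|H_{\mathbf m+e_l}\|_{\mu,k}$, so summing with the $\mu^{-|\mathbf m|-1}$-weights produces $\mu^{-1}\|\partial_{z_l}\Phi\|_{\mu,k}\le c^{-1}\|H\|_{\mu,k}$. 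This establishes the required $\mu$-uniform bound on $\operatorname{LHS}_0^{-1}$, and together with the Neumann-series argument of the first paragraph closes the proof.
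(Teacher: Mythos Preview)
Your argument is correct, and in the semi-simple case it coincides with the paper's (the same scalar decoupling, the same denominator bound, the same $(1+|\mathbf j|)^{-1}$ factor that absorbs the derivative in $z$).

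Where you genuinely diverge is in the non-semi-simple case. The paper does \emph{not} run a Neumann series in $r$. Instead it writes out the coefficient equations \eqref{GijPhiij}--\eqref{GnjPhinj}, shows in \lemmaref{GijPhiij} that the $G$-component reduces to a pure projection ($G_{i\mathbf j}=H_{i\mathbf j}$ on $\mathcal R$, using crucially that $\xi_i=1\Rightarrow\lambda_{i+1}=\lambda_i$, so the nilpotent shifts preserve membership in $\mathcal R$), and then solves the remaining triangular system for the $\Phi_{i\mathbf j}$ by an explicit combinatorial formula over paths $\mathbf P(\mathbf m,\mathbf j)$ in $\mathbb N_0^n$, borrowed from \cite{de2020a}. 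The estimates are then carried out path-by-path, with $r=\frac{1}{2K(n-1)}$ fixed explicitly. Your route---observing that $\operatorname{LHS}_1=-\Xi\Phi+\partial_z\Phi\cdot\Xi z$ is bounded $\mathcal G_{k,1}^n\{z\}\to\mathcal G_k^n\{z\}$ uniformly in $\mu$ (the $\mu$ lost on $\partial_{z_s}$ being recovered by the $\mu^{-1}$ gained on multiplication by $z_{s+1}$), and then inverting $I+r\,\pi_\Phi\operatorname{LHS}_0^{-1}\operatorname{LHS}_1$ by a geometric series---is more economical and entirely avoids the path combinatorics. What the paper's approach buys is an explicit closed formula for $\Phi_{i\mathbf j}$ and an explicit value of $r$; what yours buys is a shorter and more transparent proof of the boundedness statement that the lemma actually asks for. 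Your denominator estimate $|\lambda_i-\langle\mathbf j,\lambda\rangle-w^k|\ge c(1+|\mathbf j|+|w|^k)$ is in fact sharper than what the paper invokes (it only records the $(1+|\mathbf j|)$-growth, citing \cite[Lemma~3]{bonckaert2008a}), though the extra $|w|^k$ is not needed for the argument.
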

We will prove \lemmaref{LHSinv} in sections \ref{sec:ss} and \ref{sec:nss} below. In particular, in section \ref{sec:ss} we consider the semi-simple case $\Xi=0$ (independent of $r$) and then in the subsequent section \ref{sec:nss} we consider $\Xi\ne 0$. 
Both cases rest upon the following fact: For $\nu>0$ small enough, there is a constant $K>0$ such that 
\begin{align}\eqlab{lemma3est}
\rspf{\frac{1}{\vert \lambda_i-\langle \mathbf j,\lambda\rangle-w^k\vert}} \le \frac{K}{1+\vert \mathbf j\vert},\quad \forall\,(i,\mathbf j)\notin \mathcal R,\,w\in \Omega(\nu,\theta,\alpha).
\end{align}
This follows directly from the assumptions of \thmref{main1}, in particular \eqref{condlambdak}. In fact, it corresponds directly to \cite[Lemma 3]{bonckaert2008a} (upon replacing their $w$ by our $w^k$ \rspf{and their right hand side of \cite[Eq. (9)]{bonckaert2008a} by our $C(1+\vert \mathbf j\vert)$ (in line with our assumption \eqref{condlambdak}))}.

In fairness, the semi-simple case also follows from the non-semi-simple case (it corresponds to $r=0$), but we prefer to separate the proof nonetheless. This is motivated by the fact that $\Xi=0$ is substantially easier and follows \cite{bonckaert2008a} closely. For $\Xi\ne 0$, we will need to introduce some new notation and some additional auxiliary results (from \cite{de2020a}).
\subsection{Completing the proof of \thmref{main1}}
{Given \lemmaref{LHSinv}, we can now complete the proof of \thmref{main1}: Fix $r>0$ small enough. Then the particular inverse $\operatorname{LHS}^{-1}:H\in \mathcal G_k^n\{z\}\mapsto (G,\Phi)\in \mathcal G_k^n\{z\}\times \mathcal G_{k,1}^n\{z\}$ in \lemmaref{LHSinv} is well-defined and uniformly bounded with respect to $\mu>0$}. This then leads to the fixed point equation
\begin{align*}
 (G,\Phi) = (\operatorname{LHS}^{-1}\circ \operatorname{RHS})(G,\Phi),
\end{align*}
for a solution $(G,\Phi)$ of $\operatorname{LHS}=\operatorname{RHS}$, 
recall the definition of $\operatorname{RHS}$ in \eqref{rhs}.
\begin{proposition}\proplab{final}
 There is an $\xi>0$ and a $\mu>0$ large enough such that $(G,\Phi)\mapsto (\operatorname{LHS}^{-1}\circ \operatorname{RHS})(G,\Phi)$ is a contraction on the closed subset of $\mathcal G_k^n\{z\}\times \mathcal G_{k,1}^n\{z\}$ defined by $\Vert G\Vert_{\mu,k}\le \xi, \Vert \Phi\Vert_{\mu,k,1}\le \xi$.
\end{proposition}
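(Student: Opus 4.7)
The strategy is to verify Banach's fixed-point conditions for $\mathcal T := \operatorname{LHS}^{-1}\circ \operatorname{RHS}$ on the closed ball
\[
B_\xi := \{(G,\Phi)\in\mathcal G_k^n\{z\}\times \mathcal G_{k,1}^n\{z\} : \Vert G\Vert_{\mu,k}\le \xi,\ \Vert \Phi\Vert_{\mu,k,1}\le \xi\}.
\]
By \lemmaref{LHSinv} there is a constant $M>0$ independent of $\mu$ with $\Vert \operatorname{LHS}^{-1}(H)\Vert \le M\Vert H\Vert_{\mu,k}$, so everything reduces to (a) a uniform bound $\Vert \operatorname{RHS}(G,\Phi)\Vert_{\mu,k}\le B_0+O(\xi/\mu)$ on $B_\xi$, and (b) a Lipschitz estimate for $\operatorname{RHS}$ on $B_\xi$ with constant $O((\xi+1)/\mu)$. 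Both become exploitable by taking $\mu$ large.

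The plan is to estimate the three summands in \eqref{rhs} using \lemmaref{conv} and \lemmaref{hxycompose}, together with the two elementary norm computations
\[
\Vert 1\Vert_{\mu,k}\le c_0 := \sup_{s\ge 0}(1+s^2)e^{-s},\qquad \Vert w^{k-1}\Vert_{\mu,k}\le c_1\mu^{-(k-1)},
\]
obtained via the substitution $s=\mu^k\vert w\vert^k$ in the defining supremum. The third summand $\tfrac1k w^{k-1}\star_k\Phi$ is then controlled by $c_1 q_k \mu^{-k}\Vert \Phi\Vert_{\mu,k}$, so it is at once a bound and a Lipschitz estimate. For the second summand $1\star_k \partial_z\Phi\star_k G$, two applications of \lemmaref{conv} together with the structural estimate $\Vert \partial_{z_i}\Phi\Vert_{\mu,k}\le \mu\Vert\Phi\Vert_{\mu,k,1}$ (built into the norm on $\mathcal G_{k,1}^n\{z\}$) yield a bound of order $\mu^{-1}\Vert\Phi\Vert_{\mu,k,1}\Vert G\Vert_{\mu,k}$; the associated Lipschitz constant on $B_\xi$ is $O(\xi/\mu)$.

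The first summand $f^\star(1\star_k\Phi)$ is the delicate one, and I would treat it via \lemmaref{hxycompose}. The hypothesis of that lemma is satisfied because $\Vert 1\star_k\Phi\Vert_{\mu,k}\le c_0 q_k \xi/\mu$ falls below the threshold of \lemmaref{hxycompose} as soon as $\mu$ is large. This gives a uniform bound $B_0 := 2^n K C_k$ independent of $\mu$ and $\xi$. For the Lipschitz piece, $f^\star(1\star_k\Phi_1)-f^\star(1\star_k\Phi_2)$ expands (following the argument behind \lemmaref{hxycompose} and \cite[Proposition 5]{bonckaert2008a}) into a sum of multi-convolutions each containing at least one factor $1\star_k(\Phi_1-\Phi_2)$, from which \lemmaref{conv} again extracts a factor $\mu^{-1}$.

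Combining these three estimates gives $\Vert \mathcal T(G,\Phi)\Vert\le MB_0 + O(\xi/\mu)$ on $B_\xi$; one then fixes $\xi := 2MB_0$ and chooses $\mu$ large enough so that both the residual $O(\xi/\mu)$ is $\le \xi/2$ and the global Lipschitz constant is $\le \tfrac12$, completing the contraction argument. The main obstacle is the bookkeeping for the nonlinear term $f^\star$: one must confirm that every term produced by the telescoping in the proof of \lemmaref{hxycompose} carries a factor of the form $1\star_k(\Phi_1-\Phi_2)$, so the estimate is truly of size $\mu^{-1}\Vert\Phi_1-\Phi_2\Vert_{\mu,k,1}$ rather than being pinned to the non-small scale $\Vert\Phi_1-\Phi_2\Vert_{\mu,k}$.
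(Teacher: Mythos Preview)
Your proposal is correct and follows essentially the same approach as the paper's own proof, which simply defers to \cite[Proposition~8]{bonckaert2008a} and notes that the argument there carries over verbatim once \lemmaref{conv}, \lemmaref{hxycompose} and \lemmaref{LHSinv} are in place. Your sketch is in fact more explicit than what the paper writes: you spell out the termwise bounds for the three summands of $\operatorname{RHS}$, the norm computations $\Vert 1\Vert_{\mu,k}=O(1)$ and $\Vert w^{k-1}\Vert_{\mu,k}=O(\mu^{-(k-1)})$, and the telescoping argument behind the Lipschitz estimate for $f^\star$, all of which are exactly the ingredients of \cite[Proposition~8]{bonckaert2008a} adapted to general $k$.
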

\begin{proof}
The result follows from \cite[Proposition 8]{bonckaert2008a} as each argument there now carries over to our general setting for any $k\in\mathbb N$. Indeed, all the necessary estimates rely on  \lemmaref{conv}, \lemmaref{hxycompose} and \lemmaref{LHSinv} which generalize Proposition 4, Proposition 6 and Proposition 7 in \cite{bonckaert2008a}. 
\end{proof}
\thmref{main1} follows, setting $g=\mathcal L_{k,\theta}(G)$ and $\phi=\mathcal L_{k,\theta}(\Phi)$, once we finish the proof of \lemmaref{LHSinv}.
\subsection{Proof of \lemmaref{LHSinv} in the semi-simple case}\label{sec:ss}
{Here we prove \lemmaref{LHSinv} in the semi-simple case:}
\begin{align}
    {A=\operatorname{diag}(\lambda_1,\ldots,\lambda_n)\,\, \mbox{where all $\lambda_i$'s are nonzero}.\eqlab{Adiag1}}
\end{align}
{Then a simple calculation shows that the equation \eqref{aux} with $\operatorname{LHS}$ defined by \eqref{lhs} is equivalent with}
\begin{align}\eqlab{auxj}
 G_{i\mathbf j}(w) -\left(\lambda_i  - \langle \mathbf j,\lambda\rangle-w^k\right) \Phi_{i\mathbf j}(w) = H_{i\mathbf j}(w)\quad \forall\,i\in\{1,\ldots,n\},\,\mathbf j\in \mathbb N_0^n.
\end{align}
Indeed, we just insert \eqref{GPhiexpand} into \eqref{lhs} and collect terms.
Our particular inverse $\operatorname{LHS}^{-1}:H\mapsto (G,\Phi)$ of $\operatorname{LHS}$, defined by \eqref{cond0}, is then simply given by
\begin{align}\eqlab{LHSinv1}
 G_{i\mathbf j}& = H_{i\mathbf j}\quad \mbox{and} \quad \Phi_{i\mathbf j}=0\quad \forall\,(i,\mathbf j)\in \mathcal R,%
\end{align}
and 
\begin{align}\eqlab{LHSinv2}
 G_{i\mathbf j}(w)& = 0\quad \mbox{and} \quad \Phi_{i\mathbf j}(w)=-\frac{H_{i\mathbf j}(w)}{\lambda_i  - \langle \mathbf j,\lambda\rangle-w^k}\quad \forall\,(i,\mathbf j)\notin \mathcal R,\,w\in \Omega.
%
\end{align}
$\operatorname{LHS}^{-1}$ has two components and the $G$-component is clearly a projection and therefore to prove \lemmaref{LHSinv} under the assumption \eqref{Adiag1}, we only have to treat the $\Phi$-component. 
  For this we use \eqref{lemma3est} and \eqref{LHSinv2}, to obtain
 \begin{align*}
  \Vert \Phi_i\Vert_{\mu,k} = \sum_{\mathbf j\,:\,(i,\mathbf j)\notin \mathcal R} \Vert \Phi_{i\mathbf{j}}\Vert_{\mu,k}\mu^{-\vert \mathbf j\vert}\le K \sum_{\mathbf j\,:\,(i,\mathbf j)\notin \mathcal R} \frac{\Vert H_{i\mathbf{j}}\Vert_{\mu,k}}{1+\vert \mathbf j\vert} \mu^{-\vert \mathbf j\vert}\quad\forall\, i \in \{1,\ldots,n\}.
 \end{align*}
In this way, we have that 
\begin{align*}
  \Vert \Phi\Vert_{\mu,k} \le K\Vert H\Vert_{\mu,k}.
\end{align*}
For the derivatives, let $q\in \{1,\ldots,n\}$ and denote by $\mathbf e_q$ the $n$-tuple with one single nonzero element $(\mathbf e_q)_{q}=1$ at the $q$'th position. Then we notice that 
\begin{align}\eqlab{Philzq}
 \frac{\partial}{\partial z_q}\Phi_i = \sum_{\mathbf j\,:\,(i,\mathbf j)\notin \mathcal R} \Phi_{i\mathbf j} j_q z^{\mathbf j-\mathbf e_q},
\end{align}
and upon estimating and using \eqref{LHSinv2} and \eqref{lemma3est}, we obtain 
\begin{align*}
\Vert \frac{\partial}{\partial z_q}\Phi_i\Vert_{\mu,k} \le K\sum_{\mathbf j\,:\,(i,\mathbf j)\notin \mathcal R} \frac{\Vert H_{i\mathbf j}\Vert_{\mu,k}}{1+\vert \mathbf j\vert} j_q \mu^{-\vert \mathbf j-\mathbf e_q\vert}\le K\mu \Vert H\Vert_{\mu,k}\quad \forall\,i\in\{1,\ldots,n\},\,\, q\in\{1,\ldots,n\}.
\end{align*}
Here we have also used the definition of the norm on $\mathcal G_{k}^n\{z\}$ and that $\frac{j_q}{1+\vert \mathbf j\vert}\le 1$ for all $\mathbf j\in \mathbb N_0$, $q\in\{1,\ldots,n\}$. Subsequently, using the definition of the norm on $\mathcal G_{k,1}^n \{z\}$, we complete the proof.

\subsection[Proof of \lemmaref{LHSinv} for nonzero Xi]{Proof of \lemmaref{LHSinv} for $\Xi\ne 0$}\label{sec:nss}
We now turn to the proof of \lemmaref{LHSinv} in the non-semi-simple case $\Xi\ne 0$. Recall that $C>0$ and that 
\begin{align}\eqlab{mathR}
\rsp{\mathcal R = \left\{(i,\mathbf j)\in \{1,\ldots,n\}\times \mathbb N_0^n\,:\,
 \vert \lambda_i-\langle \mathbf j,\lambda\rangle \vert {\le} C (1+\vert \mathbf  j\vert)\right\}}.
\end{align}
Recall also that $A$ (cf. \eqref{Ar} and the discussion around there) is in the (scaled) Jordan normal form:
\begin{align*}
    A=\operatorname{diag}(\lambda_1,\ldots,\lambda_n)+r\Xi.
\end{align*}
Let
\begin{align*}
 \chi_s (\mathbf j)&:=\begin{cases} 
                     0 & \text{if}\quad j_s=0\\
                     1 & \text{if}\quad j_s\ne 0
                    \end{cases}\quad \forall\,\mathbf j=(j_1,\ldots,j_n)\in \mathbb N_0^n,
                    \end{align*}
                    and 
                    \begin{align}
                    \mathbf d_s&: = (0,\ldots,0,\underset{\mbox{$s$th}}{1},-1,0,\ldots,0)\in \mathbb N_0^n,\eqlab{ds}
\end{align}
for all $s\in\{1,\ldots,n-1\}$.
{It is then a simple calculation to show that our particular solution $(G,\Phi)$ (satisfying \eqref{cond0}) of $\operatorname{LHS}(G,\Phi)=H$ with $H_{\mathbf j}=(H_{1\mathbf j},\ldots,H_{n\mathbf j})\in \mathcal G_k^n$ given, is equivalently determined by the following equations for the coefficients: For any $i\in \{1,\ldots,n-1\}$:
\begin{align}
G_{i\mathbf j} -(\lambda_i-\langle \mathbf j,\lambda\rangle -w^k) \Phi_{i\mathbf j} = H_{i\mathbf j}+r\xi_{i}\Phi_{(i+1)\mathbf j}-r\sum_{s=1}^{n-1} \chi_{s+1}(\mathbf j)\xi_{s} (j_s+1) \Phi_{i(\mathbf j+\mathbf d_s)}\quad \forall\,\mathbf j\in \mathbb N_0^n,\eqlab{GijPhiij}
\end{align}
and for $i=n$:
\begin{align}
G_{n\mathbf j} -(\lambda_n-\langle \mathbf j,\lambda\rangle -w^k) \Phi_{n\mathbf j} = H_{n\mathbf j}-r\sum_{s=1}^{n-1} \chi_{s+1}(\mathbf j)\xi_{s} (j_s+1)\Phi_{n(\mathbf j+\mathbf d_s)}\quad \forall\,\mathbf j\in \mathbb N_0^n .\eqlab{GnjPhinj}
\end{align}
Indeed (as in the semi-simple case), we just insert \eqref{GPhiexpand} into \eqref{lhs}, rearrange and collect terms of order $z^{\mathbf j}$.


\begin{lemma}\lemmalab{GijPhiij}
Consider \eqref{GijPhiij} and \eqref{GnjPhinj} and suppose \eqref{cond0}. Then:
\begin{align}
G_{i\mathbf j} &= H_{i\mathbf j}\quad \forall\,(i,\mathbf j)\in \mathcal R, \eqlab{GH}
\end{align}
and for all $(i,\mathbf j)\notin \mathcal R, \,i\in \{1,\ldots,n-1\}$:
\begin{align}
-(\lambda_i-\langle \mathbf j,\lambda\rangle -w^k) \Phi_{i\mathbf j} &= H_{i\mathbf j}+r\xi_{i}\Phi_{(i+1)\mathbf j}-r\sum_{s=1}^{n-1} \chi_{s+1}(\mathbf j)\xi_{s} (j_s+1) \Phi_{i(\mathbf j+\mathbf d_s)}.\eqlab{eqni}
\end{align}
Finally, for $i=n$ the following holds true
\begin{align}
-(\lambda_n-\langle \mathbf j,\lambda\rangle -w^k) \Phi_{n\mathbf j} &= H_{n\mathbf j}-r\sum_{s=1}^{n-1} \chi_{s+1}(\mathbf j)\xi_{s} (j_s+1)\Phi_{n(\mathbf j+\mathbf d_s)} \quad \forall\,\mathbf j\,:\,(n,\mathbf j)\notin \mathcal R.\eqlab{eqnn}
\end{align}
\end{lemma}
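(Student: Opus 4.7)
The plan is to prove \lemmaref{GijPhiij} by substituting \eqref{cond0} into \eqref{GijPhiij} and \eqref{GnjPhinj} and rearranging; the only nontrivial point is the vanishing of certain $\Phi$-correction terms for $(i,\mathbf j)\in \mathcal R$. The case $(i,\mathbf j)\notin \mathcal R$ is immediate: \eqref{cond0} gives $G_{i\mathbf j}=0$, and so \eqref{GijPhiij} (when $i\in\{1,\ldots,n-1\}$) and \eqref{GnjPhinj} (when $i=n$) rearrange at once to \eqref{eqni} and \eqref{eqnn} respectively, with no further argument needed.

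For the substantive case $(i,\mathbf j)\in \mathcal R$, condition \eqref{cond0} forces $\Phi_{i\mathbf j}=0$, so the left-hand side of \eqref{GijPhiij} or \eqref{GnjPhinj} collapses to $G_{i\mathbf j}$ and it suffices to show that the two $\Phi$-corrections on the right-hand side both vanish. This is where assumption \eqref{xiicond} enters. Whenever $\xi_i=1$, we have $\lambda_{i+1}=\lambda_i$, so the resonant quantity $\lambda_{i+1}-\langle \mathbf j,\lambda\rangle$ coincides with $\lambda_i-\langle \mathbf j,\lambda\rangle$; by \eqref{mathR0} the points $(i,\mathbf j)$ and $(i+1,\mathbf j)$ therefore share membership in $\mathcal R$, and hence $\Phi_{(i+1)\mathbf j}=0$. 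Similarly, whenever $\xi_s=1$ and $\chi_{s+1}(\mathbf j)=1$, the definition of $\mathbf d_s$ in \eqref{ds} combined with $\lambda_s=\lambda_{s+1}$ yields $\langle \mathbf j+\mathbf d_s,\lambda\rangle=\langle \mathbf j,\lambda\rangle$ and $\vert \mathbf j+\mathbf d_s\vert=\vert \mathbf j\vert$; hence $(i,\mathbf j+\mathbf d_s)\in \mathcal R$ and $\Phi_{i(\mathbf j+\mathbf d_s)}=0$. Both correction terms vanish, giving $G_{i\mathbf j}=H_{i\mathbf j}$ as in \eqref{GH}.

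I do not expect a genuine obstacle here: the argument is bookkeeping combined with the observation that $\mathcal R$ is closed under the two index shifts $(i,\mathbf j)\mapsto(i+1,\mathbf j)$ (when $\xi_i=1$) and $(i,\mathbf j)\mapsto (i,\mathbf j+\mathbf d_s)$ (when $\xi_s=1$ and $j_{s+1}\ge 1$). This closure is automatic, because both shifts leave the resonant quantity $\lambda_i-\langle \mathbf j,\lambda\rangle$ and the magnitude $\vert \mathbf j\vert$ invariant, so the small-divisor bound \eqref{mathR0} transfers verbatim between the old and shifted indices.
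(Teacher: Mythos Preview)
Your proof is correct and follows essentially the same approach as the paper's own proof: both first dispose of the case $(i,\mathbf j)\notin \mathcal R$ by setting $G_{i\mathbf j}=0$, and then for $(i,\mathbf j)\in \mathcal R$ both use \eqref{xiicond} to argue that the index shifts $(i,\mathbf j)\mapsto(i+1,\mathbf j)$ (when $\xi_i=1$) and $(i,\mathbf j)\mapsto(i,\mathbf j+\mathbf d_s)$ (when $\xi_s=1$) preserve the quantity $\lambda_i-\langle\mathbf j,\lambda\rangle$, hence preserve membership in $\mathcal R$, so the $\Phi$-corrections vanish by \eqref{cond0}. Your summary of this as ``closure of $\mathcal R$ under the two index shifts'' is exactly the mechanism the paper exploits.
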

\begin{proof}
 We only have to show \eqref{GH}, since \eqref{eqni} and \eqref{eqnn} are just \eqref{GijPhiij} and \eqref{GnjPhinj}, respectively, with $G_{i\mathbf j}=0$ for $(i,\mathbf j)\notin \mathcal R$, see \eqref{cond0}. For $(i,\mathbf j)\in \mathcal R$, we have $\Phi_{i\mathbf j}=0$ (by \eqref{cond0}) and
 \begin{align}
 G_{i\mathbf j} & = H_{i\mathbf j}+r\xi_{i}\Phi_{(i+1)\mathbf j}-r\sum_{s=1}^{n-1} \chi_{s+1}(\mathbf j)\xi_{s} (j_s+1) \Phi_{i(\mathbf j+\mathbf d_s)}\quad \forall\,\rspp{\mathbf j\,:\,(i,\mathbf j)\in \mathcal R},\,i\in\{1,\ldots,n-1\},\eqlab{GijR}
 \end{align}
 and
 \begin{align}
 G_{n\mathbf j} & = H_{n\mathbf j}-r\sum_{s=1}^{n-1} \chi_{s+1}(\mathbf j)\xi_{s} (j_s+1) \Phi_{i(\mathbf j+\mathbf d_s)}\quad \forall\,\rspp{\mathbf j\,:\,(n,\mathbf j)\in \mathcal R},\eqlab{GnjR}
 \end{align}
 by \eqref{GijPhiij} and \eqref{GnjPhinj}, respectively. From \eqref{xiicond} \rsp{and \eqref{mathR}}, we have 
 \begin{align*}
  ((i,\mathbf j)\in \mathcal R \,\,\mbox{and}\,\, \xi_i=1)\quad \Longrightarrow\quad (i+1,\mathbf j)\in \mathcal R.
 \end{align*}
Consequently, by \eqref{cond0} we have that
\begin{align}\eqlab{pp}
\xi_{i}\Phi_{(i+1)\mathbf j}=0\quad \forall\,(i,\mathbf j)\in \mathcal R.
\end{align}
Similarly, if $\xi_s=1$ so that $\lambda_{s+1}=\lambda_s$  by \eqref{xiicond}, then 
\begin{align*}
 \langle \mathbf j+\mathbf d_s,\lambda\rangle = \langle \mathbf j,\lambda\rangle +\lambda_s-\lambda_{s+1}=\langle \mathbf j,\lambda\rangle,
\end{align*}
using the definition of $\mathbf d_s$, see \eqref{ds}. This means that 
\begin{align*}
 ((i,\mathbf j)\in \mathcal R\,\,\mbox{and}\,\, \xi_s =1)\quad \Longrightarrow \quad (i,\mathbf j+\mathbf d_s)\in \mathcal R,
\end{align*}
recall \eqref{mathR}. In turn, the sums in \eqref{GijR} and \eqref{GnjR} vanish and by \eqref{pp} equations \eqref{GijR} and \eqref{GnjR} reduce to \eqref{GH} as claimed.
\end{proof}}


{We now focus on solving \eqref{eqni} and \eqref{eqnn}. For this purpose, we adapt the approach of \cite[Lemma 3.19]{de2020a}. To present the results, we use some notation from \cite{de2020a}: Let $\mathbf j,\mathbf m\in \mathbb N_0^n$. Then  $\mathbf p=(p_1,\ldots,p_\ell)\in \mathbb N_0^{\ell}$ for some $\ell\in \mathbb N_0$ is called \textit{a path from $\mathbf m$ to $\mathbf j$} if 
\begin{align*}
 \mathbf j = \mathbf m - \sum_{s=1}^{\ell} \mathbf d_{p_{s}}.
\end{align*}
Given $\mathbf j\in \mathbb N_0$, then 
\begin{align*}
 \mathbf C(\mathbf j): = \left\{\mathbf m\in \mathbb N_0^n \,:\,\exists \ell \in \mathbb N_0,\,\exists p_1,\ldots,p_l\in \{1,\ldots,n-1\},\,\mathbf j = \mathbf m - \sum_{i=1}^\ell \mathbf d_{p_i}\right\},
\end{align*}
is the set of all $\mathbf m\in \mathbb N_0^n$ for which there exists a path from $\mathbf m$ to $\mathbf j$. Clearly,
\begin{align}\eqlab{vmvj}
 \mathbf m\in \mathbf C(\mathbf j)\Longrightarrow \vert \mathbf m\vert = \vert \mathbf j\vert,
\end{align}
where $\vert \mathbf j\vert=j_1+\ldots+j_n$.
Moreover,
\begin{align*}
\mathbf P(\mathbf m,\mathbf j):=\left\{(p_1,\ldots,p_{\ell(\mathbf m,\mathbf j)} )\in \{1,\ldots,n-1\}^{\ell(\mathbf m,\mathbf j) } \,:\,\mbox{$(p_1,\ldots,p_{\ell(\mathbf m,\mathbf j)})$ is a path from $\mathbf m$ to $\mathbf j$}\right\},
\end{align*}
is the set of all paths from $\mathbf m$ to $\mathbf j$; obviously $\mathbf P(\textbf m,\mathbf j)$ may be empty but it is always  well-defined. In particular, the length $\ell(\mathbf m,\mathbf j)$ is independent of the path, see \cite[Proposition 3.14]{de2020a}.}

{Finally, define $\mathcal T[{i,\mathbf j}]:\mathcal G_k\rightarrow \mathcal G_k$ and $\mathcal K[{i, s,\mathbf j}]:\mathcal G_k\rightarrow \mathcal G_k$ as follows 
\begin{align*}
 \mathcal T[{i,\mathbf j}](h)(w)&: = \frac{-h(w)}{\lambda_i-\langle \mathbf j,\lambda\rangle -w^k}\quad \forall\,h\in \mathcal G_k,\,(i,\mathbf j)\notin \mathcal R,
 \end{align*}
 and 
 \begin{align*}
 \mathcal K[{i, s,\mathbf j}] (h)(w)&:=-r\xi_{s}(j_s+1) \mathcal T[{i,\mathbf j}](h)\quad \forall\,h\in \mathcal G_k,\,i\in\{1,\ldots,n\},\,s\in \{1,\ldots,n-1\},\,\mathbf j\in \mathbb N_0^n,\,r>0.
\end{align*}
From \eqref{lemma3est}, we obtain that 
\begin{align}
\Vert \mathcal T[{i,\mathbf j}](h)\Vert_{\mu,k}\le \frac{K}{1+\vert \mathbf j\vert}\Vert h\Vert_{\mu,k}\quad \forall\,h\in \mathcal G_k,\,(i,\mathbf j)\notin \mathcal R.\eqlab{estT}
\end{align}
In turn, it follows that 
\begin{align}\eqlab{vertK}
 \Vert \mathcal K[{i, s,\mathbf j}](h)\Vert_{\mu,k}\le \frac{r K(j_s+1)}{1+\vert \mathbf j\vert}  \Vert h\Vert_{\mu,k}\quad \forall\, h\in \mathcal G_k,\,s\in\{1,\ldots,n-1\},\,r>0.
\end{align}
We then use $\mathcal T[{i,\mathbf j}]$ to write \eqref{eqni} and \eqref{eqnn} in the equivalent forms
\begin{align}
  \Phi_{i\mathbf j} = \mathcal T[{i,\mathbf j}]( H_{i\mathbf j}+r\xi_{i}\Phi_{(i+1)\mathbf j})-r\sum_{s=1}^{n-1} \chi_{s+1}(\mathbf j)\xi_{s} (j_s+1)\mathcal T[{i,\mathbf j}](\Phi_{i(\mathbf j+\mathbf d_s)}) \quad \forall\,\mathbf j\,:\,(i,\mathbf j)\notin \mathcal R,\eqlab{eqni2}
\end{align}
for $i\in\{1,\ldots,n-1\}$
and 
\begin{align}
 \Phi_{n\mathbf j} = \mathcal T[{n,\mathbf j}]( H_{n\mathbf j})-r\sum_{s=1}^{n-1} \chi_{s+1}(\mathbf j)\xi_{s} (j_s+1)\mathcal T[{n,\mathbf j}](\Phi_{n(\mathbf j+\mathbf d_s)}) \quad \forall\,(n,\mathbf j)\notin \mathcal R,\eqlab{eqnn2}
\end{align}
for $i=n$. 
We first solve \eqref{eqnn2} (using $\mathcal K[{n,s,\mathbf j}]$) and here is why: If $\Phi_{(i+1)\mathbf j}$ is known then \eqref{eqni2} takes the same form as \eqref{eqnn2} (using $H_{i\mathbf j}+r\xi_{i}\Phi_{(i+1)\mathbf j} \leftrightarrow H_{n\mathbf j}$, $\Phi_{i\mathbf j}\leftrightarrow \Phi_{n\mathbf j}$, $i\leftrightarrow n$). Therefore once we have solved \eqref{eqnn2}, we can solve for \eqref{eqni2} for any $i\in\{1,\ldots, n-1\}$ inductively. }

 {
Notice that \eqref{eqnn2} reduces to 
\begin{align}
\Phi_{n\mathbf j} = \mathcal T[{n,\mathbf j}]( H_{n\mathbf j})\quad \forall\,\mathbf j=(j_1,0,\ldots,0)\in \mathbb N_0\times \{0\}^{n-1},\eqlab{j11}
\end{align}
since $\chi_{s+1}(\rsp{\mathbf j})=0$, $s\in \{1,\ldots,n-1\}$, for all such $\mathbf j$, 
and that 
\begin{align}\eqlab{j12}
    \mathbf C(\mathbf j)=\emptyset \quad \forall\,\mathbf j=(j_1,0,\ldots,0)\in \mathbb N_0\times \{0\}^{n-1};
\end{align}
this latter fact follows from $(j_1,0,\ldots,0)+\textbf d_s\notin \mathbb N_0^n$ for all $s\in \{1,\ldots,n-1\}$, recall \eqref{ds}.}

{
\begin{lemma}\lemmalab{Phinsol}
 Consider \eqref{eqnn2} with $H_{n\mathbf j}\in \mathcal G_k$ for all $\mathbf j\in \mathbb N_0^n$.  
 Then 
 \begin{align}
  \Phi_{n\mathbf j} = \mathcal T[{n,\mathbf j}](H_{n\mathbf j})+\sum_{\mathbf m\in \mathbf C(\mathbf j)} \sum_{\mathbf p\in \mathbf P(\mathbf m,\mathbf j)}  \left(\prod_{u=1}^{\ell(\mathbf m,\mathbf j)} \mathcal K[{n, p_u,\mathbf m-\sum_{v=1}^u \mathbf d_{p_v} }] \circ \mathcal T[{n,\mathbf m}] \right)(H_{n\mathbf m}) \quad \forall \,\mathbf j\,:\,(n,\mathbf j)\notin \mathcal R,\eqlab{Phinsol}
 \end{align}
 where $$\prod_{u=1}^{\ell(\mathbf m,\mathbf j)} \mathcal K[{n, p_u,\mathbf m-\sum_{v=1}^u \mathbf d_{p_v} }],$$ is understood as the composition of the operators $\mathcal K[{n, p_u,\mathbf m-\sum_{v=1}^u \mathbf d_{p_v} }]$ (which commute since $\mathcal T[{i,\mathbf j}]$ is defined by multiplication) and 
 where the sums  in \eqref{Phinsol} are understood to vanish for all $\mathbf j=(j_1,0,\ldots,0)\in \mathbb N_0\times \{0\}^{n-1}$ (where the summation index sets are empty, recall \eqref{j12}). 
\end{lemma}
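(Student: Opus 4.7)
The plan is to establish \eqref{Phinsol} by induction on the non-negative integer weight
\[
  N(\mathbf j) := \sum_{i=2}^n (i-1) j_i.
\]
This functional behaves well under the recursion \eqref{eqnn2}: replacing $\mathbf j$ by $\mathbf j+\mathbf d_s$, as is possible precisely when $\chi_{s+1}(\mathbf j)=1$, decreases $N$ by exactly one, because $\mathbf d_s$ transfers a unit of mass from position $s+1$ down to position $s$. Consequently the unfolding of \eqref{eqnn2} terminates in finitely many steps, so the induction is well-founded.

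The base case $N(\mathbf j)=0$ forces $\mathbf j=(j_1,0,\ldots,0)$, and then $\chi_{s+1}(\mathbf j)=0$ for every $s\in\{1,\ldots,n-1\}$. Hence \eqref{eqnn2} collapses to $\Phi_{n\mathbf j}=\mathcal T[n,\mathbf j](H_{n\mathbf j})$, which matches \eqref{Phinsol} since, by \eqref{j12} and the stated convention, the combinatorial sum is empty.

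For the inductive step, the plan is to substitute the inductive hypothesis for each $\Phi_{n(\mathbf j+\mathbf d_s)}$ into \eqref{eqnn2} and reorganise the result via the following bijection: any path $(p_1,\ldots,p_\ell)\in\mathbf P(\mathbf m,\mathbf j)$ of length $\ell\ge 1$ is determined uniquely by its last letter $s:=p_\ell$ together with the truncated path $(p_1,\ldots,p_{\ell-1})\in\mathbf P(\mathbf m,\mathbf j+\mathbf d_s)$, with the convention that a length-one path $(s)$ corresponds to the empty truncated path with $\mathbf m=\mathbf j+\mathbf d_s$. Under this correspondence the partial sums $\mathbf m-\sum_{v=1}^u\mathbf d_{p_v}$ for $u\le\ell-1$ are identical to those of the shorter path, while the outermost factor of the composition becomes exactly $\mathcal K[n,s,\mathbf j]$, since $\mathbf m-\sum_{v=1}^{\ell}\mathbf d_{p_v}=\mathbf j$. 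As every $\mathcal K$-operator acts by multiplication by a scalar function of $w$, these operators commute and the factorisation is unambiguous. After this regrouping, the contribution of the inductive hypothesis (both the $\mathcal T[n,\mathbf j+\mathbf d_s](H_{n(\mathbf j+\mathbf d_s)})$ standalone terms and the nested sums) reproduces the length-one and length-$\ge 2$ path contributions to \eqref{Phinsol} at $\mathbf j$, while the standalone term $\mathcal T[n,\mathbf j](H_{n\mathbf j})$ comes directly from the corresponding standalone term in \eqref{eqnn2}.

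The hard part is purely organisational: one must match the length parameters $\ell(\mathbf m,\mathbf j)=\ell(\mathbf m,\mathbf j+\mathbf d_s)+1$ under the bijection, confirm that each intermediate partial sum along any path in $\mathbf P(\mathbf m,\mathbf j)$ lies in $\mathbb N_0^n$ so that every use of $\chi_{s+1}=1$ is legitimate, and track the combinatorial factors $(j_s+1)$ that enter the definition of $\mathcal K[n,s,\mathbf j]$ through the scaling in \eqref{Ar}. Once these details are marshalled, the double sum telescopes and \eqref{Phinsol} follows.
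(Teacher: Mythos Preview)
Your proof is correct and follows essentially the same approach as the paper: both argue by induction, substitute the hypothesis for $\Phi_{n(\mathbf j+\mathbf d_s)}$ into \eqref{eqnn2}, and use the bijection between paths in $\mathbf P(\mathbf m,\mathbf j)$ and pairs consisting of a last step $s$ with $\chi_{s+1}(\mathbf j)=1$ and a shorter path in $\mathbf P(\mathbf m,\mathbf j+\mathbf d_s)$. The only cosmetic difference is the induction ordering: the paper uses the reverse-lexicographic order on $\mathbb N_0^n$ (so that $\mathbf j+\mathbf d_s<\mathbf j$), whereas you use the weight $N(\mathbf j)=\sum_{i=2}^n(i-1)j_i$, which is arguably cleaner since it drops by exactly one at each step and makes the finite termination explicit.
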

\begin{proof}
The proof is by induction, with $\mathbf j=(j_1,0,\ldots,0)$ being the base case (which is true by \eqref{j11}). 
The induction step then follows \cite[Lemma 3.19]{de2020a} and we will therefore only sketch the proof. It is based upon an ordering on $\mathbb N_0^n$ defined as follows: $\mathbf m<\mathbf j$ if the first nonzero value (starting from the right) of $\mathbf j-\mathbf m$ is positive. Clearly, $\mathbf j+\mathbf d_s<\mathbf j$ for all $s\in\{1,\ldots,n-1\}$ and all \rspp{$\mathbf j\in \mathbb N_0^n$, recall \eqref{ds}}, and the induction hypothesis therefore assumes that \eqref{Phinsol} holds true for $\Phi_{n (\mathbf j+\mathbf d_s)}$ for all $s\in\{1,\ldots,n-1\}$.  To complete the induction step, we have to show that \eqref{Phinsol} holds true for all $\Phi_{n \mathbf j}$. To proceed,  we then use \cite[p. 374]{de2020a} to rewrite the sum in the definition of $\Phi_{n\mathbf j}(w)$ so that
 \begin{align*}
  \Phi_{n\mathbf j}& = \mathcal T[{n,\mathbf j}](H_{n\mathbf j})+\sum_{s=1}^{n-1} \chi_{s+1}(\mathbf j) \mathcal K[{n,s,\mathbf j}] \circ\Bigg[ \mathcal T[{n,\mathbf j+\mathbf d_s}](H_{n(\mathbf j+\mathbf d_s)})\\
  &+ \sum_{\mathbf m\in \mathbf C(\mathbf j+\mathbf d_s)} \sum_{\mathbf p\in  \mathbf P(\mathbf m,\mathbf j+\mathbf d_s)} \Bigg(\prod_{u=1}^{\ell(\mathbf m,\mathbf j+\mathbf d_s)} \mathcal K[{n, p_u,\mathbf m-\sum_{v=1}^u \mathbf d_{p_v} }] \circ \mathcal T[{n,\mathbf m}] \Bigg)(H_{n\mathbf m}) \Bigg].
 \end{align*}
 This rests upon the simple fact that $\mathbf j+\mathbf d_s\in\mathbf C(\mathbf j)$ if and only if $\chi_{s+1}(\mathbf j)=1$.
 Then by using the definition of $ \mathcal K[{i, s,\mathbf j}]$, we obtain
 \begin{align*}
   \Phi_{n\mathbf j}& = \mathcal T[{n,\mathbf j}](H_{n\mathbf j})-r\sum_{s=1}^{n-1} \chi_{s+1}(\mathbf j) \xi_{s}(j_s+1) \mathcal T[{n,\mathbf j}] \circ \Bigg[ \mathcal T[{n,\mathbf j+\mathbf d_s}](H_{n(\mathbf j+\mathbf d_s)})\\
  &+ \sum_{\mathbf m\in \mathbf C(\mathbf j+\mathbf d_s)} \sum_{\mathbf p\in \mathbf P(\mathbf m,\mathbf j+\mathbf d_s)} \Bigg(\prod_{u=1}^{\ell(\mathbf m,\mathbf j+\mathbf d_s)} \mathcal K[{n, p_u,\mathbf m-\sum_{v=1}^u \mathbf d_{p_v} }] \circ \mathcal T[{n,\mathbf m}] \Bigg)(H_{n\mathbf m}) \Bigg]\\
  &=\mathcal T[{n,\mathbf j}](H_{n\mathbf j})-r\sum_{s=1}^{n-1} \chi_{s+1}(\mathbf j) \xi_{s}(j_s+1) \mathcal T[{n,\mathbf j}](\Phi_{n(\mathbf j+\mathbf d_s)}),
 \end{align*}
 where we have used the induction hypothesis in the last equality.  The result therefore follows. 
\end{proof}}

\textbf{In the following, we will fix 
\begin{align}\eqlab{reqn}
r=\frac{1}{2K(n-1)}.
\end{align}
}

{We then have:
\begin{lemma}
Consider 
\begin{align*}
 \Phi_n(w,z) = \sum_{\mathbf j\,:\,(n,\mathbf j)\notin \mathcal R} \Phi_{n\mathbf j}(w)z^{\mathbf j},
\end{align*}
with $\Phi_{n\mathbf j}$ given by \eqref{Phinsol} for all $(n,\mathbf j)\notin \mathcal R$ and suppose that $H_n=\sum_{\mathbf j\in \mathbb N_0^n} H_{n\mathbf j}(w)z^{\mathbf j}\in \mathcal G_k\{z\}$. Then 
 \begin{align}
  \Vert \Phi_n \Vert_{\mu,k}\le 2K\Vert H_{n}\Vert_{\mu,k}\quad \mbox{and}\quad \Vert \partial_{z_q} \Phi_n \Vert_{\mu,k}\le 2K \mu  \Vert H_n\Vert_{\mu,k} \quad \forall\,q\in\{1,\ldots,n\}.\eqlab{this0}
  \end{align}
\end{lemma}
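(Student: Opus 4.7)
The plan is to bound $\Vert \Phi_{n\mathbf j}\Vert_{\mu,k}$ termwise from the explicit representation \eqref{Phinsol} using the operator estimates \eqref{estT} and \eqref{vertK}, and then sum against $\mu^{-|\mathbf j|}$ via a Fubini rearrangement in which $\mathbf m$ and the path $\mathbf p$ become the outer indices and $\mathbf j = \mathbf m - \sum \mathbf d_{p_v}$ is determined by them. The decisive simplification is \eqref{vmvj}: along any path one has $|\mathbf m|=|\tilde{\mathbf m}^{(u)}|=|\mathbf j|$, where $\tilde{\mathbf m}^{(u)} := \mathbf m - \sum_{v=1}^u \mathbf d_{p_v}$, so the weight $\mu^{-|\mathbf j|}$ equals $\mu^{-|\mathbf m|}$ and every denominator $1+|\cdot|$ appearing in the operator norms agrees with $1+|\mathbf j|$.

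First, each factor $\mathcal K[{n,p_u,\tilde{\mathbf m}^{(u)}}]$ in the product in \eqref{Phinsol} has operator norm at most $rK(1+\tilde{\mathbf m}^{(u)}_{p_u})/(1+|\mathbf j|)\le rK$, since $\tilde{\mathbf m}^{(u)}_{p_u}\le|\tilde{\mathbf m}^{(u)}|=|\mathbf j|$, while $\mathcal T[{n,\mathbf m}]$ contributes at most $K/(1+|\mathbf j|)$ from \eqref{estT}. Since all these operators commute (each is multiplication by a function), I can multiply the norms to bound every summand in \eqref{Phinsol} indexed by a pair $(\mathbf m,\mathbf p)$ of path length $\ell$ by $(rK)^\ell K(1+|\mathbf j|)^{-1}\Vert H_{n\mathbf m}\Vert_{\mu,k}$, with the leading $\mathcal T[{n,\mathbf j}](H_{n\mathbf j})$ term corresponding to the empty path $\ell=0$.

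Summing $\mu^{-|\mathbf j|}\Vert \Phi_{n\mathbf j}\Vert_{\mu,k}$ over $\mathbf j$ with $(n,\mathbf j)\notin \mathcal R$ and swapping the order of summation, so that $\mathbf m$ and $\mathbf p\in\bigcup_{\ell\ge 0}\{1,\ldots,n-1\}^\ell$ are the outer indices (with $\mathbf j$ uniquely determined), I use the crude count $|\mathbf P(\mathbf m,\mathbf j)|\le (n-1)^{\ell(\mathbf m,\mathbf j)}$ to obtain
\begin{align*}
  \Vert \Phi_n\Vert_{\mu,k} \le K\sum_{\mathbf m}\mu^{-|\mathbf m|}\Vert H_{n\mathbf m}\Vert_{\mu,k}\sum_{\ell\ge 0}\bigl(rK(n-1)\bigr)^\ell.
\end{align*}
With the choice \eqref{reqn} one has $rK(n-1)=1/2$, so the geometric series equals $2$ and the first bound in \eqref{this0} follows.

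For the derivative bound, I differentiate termwise, $\partial_{z_q}\Phi_n=\sum_{\mathbf j}j_q\Phi_{n\mathbf j}z^{\mathbf j-\mathbf e_q}$, so that $\Vert \partial_{z_q}\Phi_n\Vert_{\mu,k}=\mu\sum_{\mathbf j}j_q\Vert\Phi_{n\mathbf j}\Vert_{\mu,k}\mu^{-|\mathbf j|}$, and rerun the same chain of estimates: the extra factor $j_q$ is absorbed by $(1+|\mathbf j|)^{-1}$ since $j_q/(1+|\mathbf j|)\le 1$, while $\mu^{-|\mathbf j-\mathbf e_q|}=\mu\cdot\mu^{-|\mathbf j|}$ yields the stated extra factor of $\mu$. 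The main technical hurdle is the careful bookkeeping of the Fubini exchange, together with checking that each length-$\ell$ sequence $\mathbf p\in\{1,\ldots,n-1\}^\ell$ pointed at a fixed $\mathbf m$ determines a unique $\mathbf j\in\mathbb N_0^n$ (so the rearrangement does not overcount); the genuinely analytic content has already been packaged in \eqref{estT} and \eqref{vertK}.
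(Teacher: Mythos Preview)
Your argument is correct and follows essentially the same line as the paper's proof: bound each summand in \eqref{Phinsol} by $(rK)^{\ell}\,K(1+|\mathbf j|)^{-1}\Vert H_{n\mathbf m}\Vert_{\mu,k}$ via \eqref{estT}--\eqref{vertK} and \eqref{vmvj}, then sum against $\mu^{-|\mathbf j|}=\mu^{-|\mathbf m|}$, swap to $(\mathbf m,\mathbf p)$ as outer indices, and use the geometric series in $rK(n-1)=\tfrac12$ from \eqref{reqn}. The paper delegates this last summation step to \cite[p.~377]{de2020a}, whereas you have written it out explicitly; the derivative estimate via $j_q/(1+|\mathbf j|)\le 1$ is identical. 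One small wording fix: not every sequence $\mathbf p\in\{1,\dots,n-1\}^\ell$ starting from $\mathbf m$ lands at a $\mathbf j\in\mathbb N_0^n$, but since you are proving an upper bound this overcounting is harmless---the injectivity of $(\mathbf m,\mathbf p)\mapsto(\mathbf j,\mathbf m,\mathbf p)$ is all that is needed.
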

\begin{proof}
 First, from \eqref{Phinsol} we directly obtain that
 \begin{align*}
  \Vert\Phi_{n\mathbf j}\Vert&\le \frac{K}{1+\vert \mathbf j\vert} \left(\Vert H_{n\mathbf j}\Vert+\sum_{\mathbf m\in \mathbf C(\mathbf j)} \sum_{\mathbf p\in \mathbf P(\mathbf m,\mathbf j)}  {(rK)^{\ell(\mathbf m,\mathbf j)}} \Vert H_{n\mathbf m}\Vert\right)\\
  &\le K\left(\Vert H_{n\mathbf j}\Vert+\sum_{\mathbf m\in \mathbf C(\mathbf j)} \sum_{\mathbf p\in \mathbf P(\mathbf m,\mathbf j)}  {\left(\frac{1}{2(n-1)}\right)^{\ell(\mathbf m,\mathbf j)}} \Vert H_{n\mathbf m}\Vert\right)\quad \forall\,\mathbf j\,:\,(n,\mathbf j)\notin \mathcal R,
 \end{align*}
using \eqref{vmvj}, \eqref{estT}, \eqref{reqn} and
\begin{align*}
 \prod_{r=1}^{\ell(\mathbf m,\mathbf j)} \frac{rK((\mathbf m-\sum_{l=1}^r \mathbf d_{p_l})_{p_r}+1)  }{1+\vert\mathbf m\vert}=\prod_{r=1}^{\ell(\mathbf m,\mathbf j)} \frac{rK(\mathbf m-\sum_{l=1}^{r-1} \mathbf d_{p_l})_{p_r}  }{1+\vert\mathbf m\vert}\le (rK)^{\ell(\mathbf m,\mathbf j)}; 
\end{align*}
this last estimate follows from \eqref{vmvj} and \eqref{vertK}, see also \cite[p. 376]{de2020a}. The estimate for $\Vert \Phi_n\Vert$ then follows directly from the computations on \cite[p. 377]{de2020a}.
When comparing with \cite{de2020a} notice that in our case we have
\begin{align*}
 r_m: = \mu^{-1} r^{m}\quad \forall\,m\in \{1,\ldots,n\}\quad \Longrightarrow\quad \sum_{m=1}^{n-1}\frac{r_{m+1}}{r_m} = (n-1) r = \frac{1}{2K},
\end{align*}
recall \eqref{reqn}. ($r_m$ are quantities from \cite{de2020a} and Propositon 3.20 in this reference assumes that $\sum_{m=1}^{n-1}\frac{r_{m+1}}{r_m}\le \frac{1}{2K}$; the constant relating to $K$ is denoted by $\mathcal N^{-1}$ in \cite{de2020a}). 
 
 For the estimate for $\Vert\frac{\partial}{\partial z_q} \Phi_n\Vert$, we proceed completely analogously using \eqref{Philzq}, repeated here for convenience 
 \begin{align*}
  \frac{\partial}{\partial z_q} \Phi_n = \sum_{\mathbf j\,:\,(n,\mathbf j)\notin \mathcal R} \Phi_{n\mathbf j}j_q z^{\mathbf j-\mathbf e_q},
 \end{align*}
 and therefore by \eqref{Phinsol}:
\begin{align*}
 \Vert\Phi_{n\mathbf j} j_q\Vert&\le \frac{Kj_q}{1+\vert \mathbf  j\vert} \left(\Vert H_{n\mathbf j}\Vert+\sum_{\mathbf m\in \mathbf C(\mathbf j)} \sum_{\mathbf p\in \mathbf P(\mathbf m,\mathbf j)}  {(rK)^{\ell(\mathbf m,\mathbf j)}} \Vert H_{n\mathbf m}\Vert\right)\\
  &\le K\left(\Vert H_{n\mathbf j}\Vert+\sum_{\mathbf m\in \mathbf C(\mathbf j)} \sum_{\mathbf p\in \mathbf P(\mathbf m,\mathbf j)}  {(rK)^{\ell(\mathbf m,\mathbf j)}} \Vert H_{n\mathbf m}\Vert\right)\quad \forall\,\mathbf j\,:\,(n,\mathbf j)\notin \mathcal R.
\end{align*}
\end{proof}}
{We are now in a position to complete the proof of \lemmaref{LHSinv} in the case where $\Xi\ne 0$:
\begin{proof}[Completing the proof of \lemmaref{LHSinv}]
We will show the following:
 \begin{align}
  \Vert \Phi\Vert_{\mu,k}\le 2^n K \Vert H\Vert_{\mu,k},\quad \Vert \frac{\partial}{\partial z_q} \Phi\Vert_{\mu,k}\le 2^n K \mu\Vert H\Vert_{\mu,k}\quad \forall\,q\in\{1,\ldots,n\}.\eqlab{final}
 \end{align}
 By \lemmaref{GijPhiij}, we only have to treat the $\Phi$-component of $\operatorname{LHS}^{-1}$ (as in the semi-simple case) and the estimates \eqref{final} will therefore complete the proof of \lemmaref{LHSinv}. 
By \eqref{this0} we have already established that \begin{align}
  \Vert \Phi_n \Vert_{\mu,k}\le 2K\Vert H_{n}\Vert_{\mu,k}\quad \mbox{and}\quad \Vert \partial_{z_q} \Phi_n \Vert_{\mu,k}\le 2K \mu  \Vert H_n\Vert_{\mu,k} \quad \forall\,q\in\{1,\ldots,n\}. \eqlab{this}
 \end{align}
Then in the same way (as described above \lemmaref{Phinsol}), we obtain from \eqref{eqni2} that
\begin{align}
 \Vert \Phi_i\Vert_{\mu,k} \le 2K \left(\Vert H_i\Vert_{\mu,k}+r\Vert \Phi_{i+1}\Vert_{\mu,k}\right)\quad \forall\,i\in\{1,\ldots,n-1\}.\eqlab{this2}
\end{align}
and 
\begin{align}
 \Vert \partial_{z_q} \Phi_i \Vert_{\mu,k}\le 2K \mu \left(\Vert H_i\Vert_{\mu,k}+r\Vert \Phi_{i+1}\Vert_{\mu,k}\right) \quad \forall\,q\in\{1,\ldots,n\}.\eqlab{this22}
\end{align}
We now claim that 
\begin{align*}
 \Vert \Phi_{n-i}\Vert_{\mu,k} \le 2^{i+1} K \Vert H\Vert_{\mu,k},\quad \Vert H\Vert_{\mu,k} = \sup_{i\in \{1,\ldots,n\}}\Vert H_i\Vert_{\mu,k}\quad \forall\,i\in \{0,\ldots,n-1\}.
\end{align*}
The proof is by induction. It is true for $i=0$ (base case) by \eqref{this}. Now for the induction step, we use \eqref{this2}$_{i\rightarrow n-i}$:
\begin{align*}
 \Vert \Phi_{n-i}\Vert_{\mu,k} &\le 2K  \left(\Vert H\Vert_{\mu,k}+r\Vert \Phi_{n-(i-1)}\Vert_{\mu,k}\right)\\
 &\le 2K \Vert H\Vert_{\mu,k} \left(1+ r 2^{i} K \right)\\
 &\le 2K \Vert H\Vert_{\mu,k} (1+2^{i-1})\\
 &\le 2K  \Vert H\Vert_{\mu,k} 2^i,
\end{align*}
using the induction hypothesis and $r\le \frac{1}{2K}$, recall \eqref{reqn}. Therefore
\begin{align*}
 \Vert \Phi_{n-i}\Vert_{\mu,k} \le 2^{i+1} K \Vert H\Vert_{\mu,k} = 2^{n} K \Vert H \Vert_{\mu,k}\quad \forall\,i\in\{0,\ldots,n-1\}.
\end{align*}
which completes the proof of the first estimate in \eqref{final}. The estimates for $\Vert \partial_{z_q} \Phi \Vert_{\mu,k}$ are identical using \eqref{this22}.
\end{proof}}

\section{Application to the zero-Hopf}\seclab{zeroHopf}
In this section, we consider a real analytic zero-Hopf singularity in $\mathbb R^3$ with eigenvalues $0,\pm ib$, $b>0$, of the linearization. Under the assumption that the origin is an isolated singularity, we obtain the following pre-normal form  (see \eqref{prenormalform}):
\begin{align}
 \frac{1}{k} x^{k+1} \frac{dy}{dx} = A y +x f(x,y), \quad A = \operatorname{diag}(-ib,ib),\eqlab{prenormalform2}
\end{align}
for some $k\in \mathbb N$ with $y=(y_1,y_2)\in B^2(R)$, and where $f=(f_1,f_2)$ satisfies the following property
\begin{align}\eqlab{propP}
 (\mathrm P):\quad f_2(x,y_1,\overline y_1)=\overline{f_1(x,y_1,\overline y_1)}\quad \forall\,x\in \mathbb R\cap B(\nu),\,y_1\in B(R).
\end{align}
Here $\overline z$ denotes the complex conjugate of $z\in \mathbb C$.
Indeed, to realize the normal form \eqref{prenormalform2} satisfying the property $(\mathrm P)$, see \eqref{propP}, we first proceed as in \cite{bonckaert2008a} to write the system in the pre-normal form
\begin{align}
 \frac{1}{k} x^{k+1} \frac{du}{dx} = \textnormal{B} u +x h(x,u),\eqlab{prenormalformB}
\end{align}
for some $k\in \mathbb N$. Here $u=(u_1,u_2)\in B^2(R)$, $h=(h_1,h_2)$, $h=(h_1,h_2)$ is a pair of real analytic functions defined locally near the origin, and
\begin{align*}
 \textnormal{B} = \begin{pmatrix}
      0 & b\\
      -b & 0
     \end{pmatrix},\quad b>0.
\end{align*}
We then obtain \eqref{prenormalform2} with \eqref{propP} by setting $y_1 := u_1+iu_2,\,y_2:=u_1-i u_2$ and 
\begin{align*}
f_1(x,y) := (h_1+ih_2)(x,u),\quad f_2(x,y) := (h_1-ih_2)(x,u).
\end{align*}

A consequence of property $(\mathrm P)$ is obviously that 
\begin{align*}
 y_2 = \overline y_1\quad \mbox{and}\quad x\in \mathbb R \cap B(\nu),
\end{align*}
defines an invariant set of \eqref{prenormalform2}.

\begin{proposition}\proplab{hopffinal}
 Consider the system \eqref{prenormalform2} with $f$ satisfying property $(\mathrm P)$. Then \thmref{main1} applies with both $\theta=0$ and $\theta=\pi$ and where $C>0$ and $\mathcal R\subset \{1,2\}\times \mathbb N_0^2$ are so that
 \begin{align}
\rsp{\begin{cases} 
  \vert j_1 -(j_2+1)\vert \le C (1+\vert \mathbf j\vert) \quad \Longleftrightarrow \quad {(1,\mathbf j)\in \mathcal R},\\
    \vert j_2 -(j_1+1)\vert \le C(1+\vert \mathbf j\vert) \quad \Longleftrightarrow \quad { (2,\mathbf j)\in \mathcal R},
 \end{cases}}\eqlab{R1R2}
\end{align}
 so that in both cases ($\theta=0$ and $\theta=\pi$) the resulting $g$- and $\phi$-functions of \thmref{main1} (which we denote by $g^\theta$ and $\phi^\theta$, $\theta=0,\pi$, respectively) satisfy property $(\mathrm P)$: 
 \begin{align*}
 g_2^\theta(x,z_1,\overline z_1)= \overline{g_1^\theta(x,z_1,\overline z_1)}\quad \mbox{and} \quad \phi_2^\theta(x,z_1,\overline z_1)= \overline{ \phi_1^\theta(x,z_1,\overline z_1)} \quad \forall\, x\in \mathbb R\cap \omega_k,\,z_1\in B(R).
 \end{align*}
\end{proposition}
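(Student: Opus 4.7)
The plan is to first verify that Theorem~\ref{theorem:main1} indeed applies in both directions $\theta = 0$ and $\theta = \pi$, and then to exploit the uniqueness of the fixed point in Proposition~\ref{proposition:final} to propagate property $(\mathrm P)$ from $f$ to the resulting $(g^\theta, \phi^\theta)$. With $\lambda_1 = -ib$ and $\lambda_2 = ib$, a direct computation gives
\[
\lambda_1 - \langle \mathbf j, \lambda\rangle = ib(j_1 - j_2 - 1), \qquad \lambda_2 - \langle \mathbf j, \lambda\rangle = -ib(j_2 - j_1 - 1),
\]
so \eqref{R1R2} is precisely \eqref{mathR0} after absorbing $b$ into $C$. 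Both differences being purely imaginary, they avoid every sector $S(k\theta,\xi)$ with $k\theta \in \pi\mathbb Z$ as soon as $\xi > 0$ is small, which disposes of \eqref{condlambdak} for $\theta \in \{0,\pi\}$.

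For the symmetry part I would introduce the involution
\[
\tilde \sigma(h)(x, y_1, y_2) := \bigl(\overline{h_2(\bar x, \bar y_2, \bar y_1)},\, \overline{h_1(\bar x, \bar y_2, \bar y_1)}\bigr),
\]
so that, by analytic continuation from the totally real slice $\{x \in \mathbb R,\, y_2 = \bar y_1\}$, property $(\mathrm P)$ becomes equivalent to $\tilde \sigma(f) = f$. The involution $\tilde \sigma$ lifts to an analogous involution $\tilde \Sigma$ on $\mathcal G_k^2\{z\}$ acting on the Borel variable $w$ in the same way, and a direct substitution in \eqref{laplacekH} yields
\[
\overline{\mathcal L_{\theta,k}(H)(\bar x)} = \mathcal L_{-\theta,k}(\tilde H)(x), \qquad \tilde H(w) := \overline{H(\bar w)}.
\]
Since $-\theta \equiv \theta \pmod{2\pi}$ precisely for $\theta \in \{0, \pi\}$, the Laplace transform intertwines $\tilde \Sigma$ and $\tilde \sigma$ only in these two directions, which is exactly why the statement restricts to them.

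Finally I would verify that the fixed-point operator $\operatorname{LHS}^{-1} \circ \operatorname{RHS}$ of Proposition~\ref{proposition:final} commutes with $\tilde \Sigma$: the matrix $A$ satisfies $\tau A \tau^{-1} = \bar A$, where $\tau$ swaps the two components; the convolution \eqref{HJconv2} obeys $\overline{(H\star_k J)(\bar w)} = \tilde H \star_k \tilde J$; and $\tilde \Sigma\, f^\star(1\star_k \Phi) = f^\star(1\star_k \tilde \Sigma \Phi)$ follows from \lemmaref{hxycompose} together with $\tilde \sigma f = f$. The delicate point, and the main obstacle, is that the prescription \eqref{cond0} defining $\operatorname{LHS}^{-1}$ commutes with $\tilde \Sigma$ only if $\mathcal R$ is chosen symmetrically, namely $(1,(j_1,j_2))\in \mathcal R \Leftrightarrow (2,(j_2,j_1))\in \mathcal R$; this is possible thanks to the symmetric form of \eqref{R1R2}, and I would make this choice of $\mathcal R$ explicit. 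Since the norm $\Vert\cdot\Vert_{\mu,k}$ is manifestly $\tilde \Sigma$-invariant, the closed ball of Proposition~\ref{proposition:final} is as well, so both $(G^\theta, \Phi^\theta)$ and $\tilde \Sigma(G^\theta, \Phi^\theta)$ are fixed points in it; uniqueness forces $\tilde \Sigma(G^\theta, \Phi^\theta) = (G^\theta, \Phi^\theta)$, and the intertwining identity applied to $\mathcal L_{\theta,k}$ yields $\tilde \sigma(g^\theta, \phi^\theta) = (g^\theta, \phi^\theta)$, which upon restriction to $x \in \mathbb R \cap \omega_k$, $z_2 = \bar z_1$ is precisely property $(\mathrm P)$.
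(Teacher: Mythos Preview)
Your proof is correct and follows the same overall strategy as the paper: verify the hypotheses of \thmref{main1} in the directions $\theta\in\{0,\pi\}$, show that the fixed-point construction of \propref{final} respects the reality symmetry, and transfer back via the Laplace transform. The packaging differs slightly. You introduce an explicit antiholomorphic involution $\tilde\Sigma$ on $\mathcal G_k^2\{z\}$, check that $\operatorname{LHS}^{-1}\circ\operatorname{RHS}$ commutes with it, and invoke \emph{uniqueness} of the fixed point to force $\tilde\Sigma$-invariance; the paper instead defines the closed subspace $\mathcal G_k^n\{z\}(\mathrm P)$ of functions satisfying $(\mathrm P)$ on the real slice $w\in\mathbb R\cap\Omega$, verifies by a direct coefficient computation from \eqref{LHSinv2} that $\overline{\Phi_{1j_1j_2}(w)} = \Phi_{2j_2j_1}(w)$, and then simply runs the contraction inside this subspace. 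These are two standard equivalent ways to exploit a symmetry in a Banach fixed-point argument. Your involution formulation has the advantage of making the required symmetry on $\mathcal R$, namely $(1,(j_1,j_2))\in\mathcal R \Leftrightarrow (2,(j_2,j_1))\in\mathcal R$, completely explicit; in the paper this is needed for the coefficient identity to close up but is only implicit in the symmetric shape of \eqref{R1R2}. Conversely, the paper's version, by stating $(\mathrm P)$ only on real $w$, avoids your (easy but tacit) observation that $\Omega(\nu,\theta,\alpha)$ is invariant under complex conjugation precisely when $\theta\in\{0,\pi\}$, which is what makes your $\tilde\Sigma$ a well-defined map of $\mathcal G_k^2\{z\}$ to itself.
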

\begin{proof}
First, we consider \eqref{aux} and suppose that $H=(H_1,H_2)$ satisfies property $(\mathrm P)$:
\begin{align*}
 H_2(w,z_1,\overline z_1) = \overline{H_1(w,z_1,\overline z_1)} \quad \forall\,w\in \mathbb R\cap \Omega,\,z_1\in B(R),
\end{align*}
or equivalently at the level of the power series expansion of $H$ with respect to $z=(z_1,z_2)$:
\begin{align*}
 H_{2j_1j_2}(w) = \overline{H_{1j_2j_1}(w)}\quad \forall\,w\in \mathbb R\cap \Omega,\, \mathbf j=(j_1,j_2)\in \mathbb N_0^2.
\end{align*}
It then easily follows that the inverse $\operatorname{LHS}^{-1}$, defined by \eqref{LHSinv1} and \eqref{LHSinv2} with $A=\operatorname{diag}(-ib,ib)$, preserves the property $(\mathrm P)$. Specifically, from \eqref{LHSinv2} with $\lambda_1=-ib,\lambda_2= ib$, we have that
\begin{align*}
 \overline{\Phi_{1j_1j_2}(w)} &= -\frac{\overline{ H_{1j_1j_2}(w)}}{ib+(j_1-j_2 )ib-w^k}=-\frac{H_{2j_2j_1}(w)}{ib-(j_2-j_1 )ib-w^k}=\Phi_{2j_2j_1}\quad \forall\,\mathbf j=(j_1,j_2)\in \mathbb N_0^2,
\end{align*}
when $w\in \mathbb R$. In other words, let $\mathcal G_{k}^n\{z\}(\mathrm P)\subset \mathcal G_{k}^n\{z\}$ and  $\mathcal G_{k,1}^n\{z\}(\mathrm P)\subset \mathcal G_{k,1}^n\{z\}$ denote the subspaces defined by functions that satisfy the $(\mathrm P)$ property. These subspaces are clearly closed spaces and the restriction of $\operatorname{LHS}^{-1}$: 
\begin{align*}
 \operatorname{LHS}^{-1}\vert_{\mathrm P } \,:\,\mathcal G_{k}^n\{z\}(\mathrm P)\rightarrow \mathcal G_{k}^n\{z\}(\mathrm P)\times \mathcal G_{k,1}^n\{z\}(\mathrm P),
\end{align*}
is well-defined and uniformly bounded with respect to $\mu$. A similar result holds true for $\operatorname{RHS}$, see \eqref{rhs}, insofar that it maps $(G,\Phi)\in \mathcal G_{k}^n\{z\}(\mathrm P)\times \mathcal G_{k,1}^n\{z\}(\mathrm P)$ to $\operatorname{RHS}(G,\Phi)\in \mathcal G_{k}^n\{z\}(\mathrm P)$; for this we just use that for $w\in \mathbb R$ then
$( \overline {H\star_k G})(w) = (\overline {H} \star_k \overline{G})(w)$ for all $H,G\in \mathcal G_{k}$. In this way, 
\propref{final} holds true on a closed subset of $\mathcal G_{k}^n\{z\}(\mathrm P)\times \mathcal G_{k,1}^n\{z\}(\mathrm P)$. To complete the proof, we notice that 
\begin{align*}
 \overline{\mathcal L_{\theta,k}(H)(x)} = \int_0^{\theta \infty} \overline{H}(w)e^{-w^k/x^k} k dw=\mathcal L_{\theta,k}(\overline H)(x)\quad \forall\,H\in \mathcal G_{k},
\end{align*}
when $x\in \mathbb R\cap \omega_k$ and the integration is along the positive real axis ($\theta=0$) or the negative real axis ($\theta=\pi)$. 
\end{proof}

\begin{cor}\corlab{zeroHopf}
 Consider the assumptions of \propref{hopffinal} and fix any $N\in \mathbb N$. Then there is a choice of $C(N)>0$ and $\mathcal R=\mathcal R(N)$ such that
 \begin{align*}
  g_1(x,z) &= z_1 \left(\sum_{j=0}^{N-1} h_{10}(x)(z_1 z_2)^j+(z_1z_2)^{N} h_{11}(x,z_1,z_2)\right),\\
  g_2(x,z) &=z_2 \left(\sum_{j=0}^{N-1} h_{20}(x)(z_1 z_2)^j+(z_1z_2)^{N} h_{21}(x,z_1,z_2)\right),
 \end{align*}
 where $h_{20}(x)=\overline{h_{10}(x)}$ and $h_{21}(x,z_1,\overline z_1)=\overline{ h_{11}(x,z_1,\overline z_1)}$ for all $x\in \mathbb R\cap \omega_k$ and $z_1\in B(R)$. 
In particular, we have the following real normal form:
\begin{align*}
\frac{1}{k}x^{k+1} \frac{dr}{dx} &= x\left(\sum_{j=0}^{N-1} \operatorname{Re}(h_{10}(x))r^{2j}+r^{2N} \operatorname{Re}\left(h_{11}(x,z_1,\overline z_1) \right)\right) r,\\
\frac{1}{k}x^{k+1} \frac{d\phi}{dx} & = - b+x \left(\sum_{j=0}^{N-1} \operatorname{Im}(h_{10}(x))r^{2j}+r^{2N} \operatorname{Im}\left(h_{11}(x,z_1,\overline z_1)\right)\right),
\end{align*}
for $x\in \mathbb R\cap \omega_k$, in the polar coordinates $r,\phi$ defined by $z_1=re^{i\phi}$. 

\end{cor}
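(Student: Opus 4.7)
The plan is to apply \propref{hopffinal} with a carefully chosen resonance set $\mathcal R=\mathcal R(N)$ and then translate the resulting complex normal form \eqref{finalnormalform} to polar coordinates on the real invariant set $z_2=\overline z_1$. Concretely, I would take $\mathcal R=\mathcal R_1\cup \mathcal R_2$ with
\[
\mathcal R_1 := \bigl\{(1,(j+1,j)) : 0\le j\le N-1\bigr\}\cup \bigl\{(1,(j_1,j_2)) : j_1\ge N+1,\,j_2\ge N\bigr\}
\]
and $\mathcal R_2$ the mirror set obtained via the swap $(1,(j_1,j_2))\leftrightarrow (2,(j_2,j_1))$. This choice is manifestly invariant under that swap, which is precisely the symmetry the $(\mathrm P)$-preservation argument in the proof of \propref{hopffinal} demands of $\mathcal R$.

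The first substantial step is verifying \eqref{R1R2} for this $\mathcal R$. For $(1,\mathbf j)\notin \mathcal R_1$, no pair of the form $(j+1,j)$ with $j\ge 0$ can occur: the case $j\le N-1$ lies in the first component of $\mathcal R_1$, while $j\ge N$ gives $j_1=j+1\ge N+1$ and $j_2=j\ge N$, placing it in the second. Hence $|j_1-j_2-1|\ge 1$, and combined with the complementary constraint $j_1\le N$ or $j_2\le N-1$, a short case split on the sign of $\delta:=j_1-j_2-1$ yields $j_1+j_2+1\le (2N+1)|\delta|$; so \eqref{R1R2} holds with $C=1/(2N+1)$ (the estimate for $(2,\mathbf j)\notin \mathcal R_2$ is symmetric). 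Then \propref{hopffinal} produces $g_i(x,z)=\sum_{(i,\mathbf j)\in\mathcal R} g_{i,\mathbf j}(x)z^{\mathbf j}$ satisfying $(\mathrm P)$. Splitting the sum for $g_1$ along the two components of $\mathcal R_1$ yields $z_1\sum_{j=0}^{N-1} g_{1,(j+1,j)}(x)(z_1z_2)^j+z_1(z_1z_2)^N h_{11}(x,z_1,z_2)$ with $h_{11}(x,z_1,z_2):=\sum_{a,b\ge 0}g_{1,(N+1+a,N+b)}(x)z_1^a z_2^b$; setting $h_{10,j}(x):=g_{1,(j+1,j)}(x)$ and handling $g_2$ symmetrically gives the stated decomposition. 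The identities $h_{20,j}=\overline{h_{10,j}}$ and $h_{21}(x,z_1,\overline z_1)=\overline{h_{11}(x,z_1,\overline z_1)}$ on $x\in\mathbb R\cap\omega_k$ are just the coefficient-level version of the $(\mathrm P)$-identity $g_2(x,z_1,\overline z_1)=\overline{g_1(x,z_1,\overline z_1)}$ furnished by \propref{hopffinal}.

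The polar form then arises by setting $z_1=re^{i\phi}$, $z_2=\overline z_1=re^{-i\phi}$, so $z_1z_2=r^2$ and $g_1(x,z)=z_1 Q(x,r,\phi)$ with $Q:=\sum_{j=0}^{N-1}h_{10,j}(x)r^{2j}+r^{2N}h_{11}(x,z_1,\overline z_1)$. Comparing $\dot z_1=-ibz_1+xg_1=z_1(-ib+xQ)$ with $\dot z_1=(\dot r+ir\dot\phi)e^{i\phi}$ I would read off $\dot r=xr\operatorname{Re}Q$ and $\dot\phi=-b+x\operatorname{Im}Q$; dividing by $\dot x=x^{k+1}/k$ then produces the claimed expressions for $\tfrac{1}{k}x^{k+1}\tfrac{dr}{dx}$ and $\tfrac{1}{k}x^{k+1}\tfrac{d\phi}{dx}$. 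The main obstacle in this program is the slightly finicky case analysis establishing the constant $C=1/(2N+1)$ in the second paragraph; everything else is algebraic bookkeeping and a direct invocation of \propref{hopffinal}.
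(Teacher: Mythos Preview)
Your proposal is correct and follows essentially the same approach as the paper: choose $\mathcal R$ as a ``diagonal plus quadrant'' set (symmetric under the swap $(1,(j_1,j_2))\leftrightarrow(2,(j_2,j_1))$), verify \eqref{R1R2}, apply \propref{hopffinal}, and read off the polar form. The paper's own proof is terser---it simply displays the set $\mathcal R$ and declares the existence of $C$ ``clear enough''---while you supply the explicit constant $C=1/(2N+1)$ and the polar computation; your $\mathcal R$ also differs from the paper's by a harmless shift of one index on the boundary of the quadrant, but this does not affect the argument.
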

\begin{proof}
\rsp{We simply apply \propref{hopffinal} with $C>0$ small enough. In further details, let $\mathcal R_1$ and $\mathcal R_2$ denote the projections of $\mathcal R$ onto $i=1$ and $i=2$, respectively. We then notice by \eqref{R1R2} that $\mathcal R_1=\{(j_1,j_2)\in \mathbb N_0^2\,:\,j_1=j_2+1\}$ and $\mathcal R_2=\{(j_1,j_2)\in \mathbb N_0^2\,:\,j_2=j_1+1\}$ for  $C= 0$. Consequently, for any $N\in \mathbb N$, there is $C>0$ small enough such that
\begin{align*}
 \mathcal R_1\cap \{(j_1,j_2)\in \mathbb N_0^2\,:\,j_2\le N-1 \mbox{  or  } j_1\le N\} &= \{(j_1,j_2)\in \mathbb N_0^2\,:\,j_1=j_2+1 \mbox{  and  } j_2\le N-1\},\\ \mathcal R_2\cap \{(j_1,j_2)\in \mathbb N_0^2\,:\,j_1\le N-1 \mbox{  or  } j_2\le N \} &= \{(j_1,j_2)\in \mathbb N_0^2\,:\, j_2=j_1+1 \mbox{  and  } j_1\le N-1\}.
\end{align*}
%
%
For $\mathcal R_1$ this simply means that it is a subset of the purple region in \figref{R1}. For $\mathcal R_2$, we reflect the region around the bisector. This gives the expansion of $g_1$ and $g_2$ upon application of  \propref{hopffinal}.}
\end{proof}

In the normal form of \corref{zeroHopf}, $g(x,0)=0$ for all $x\in \omega_k$. In other words, $z=0$ defines an invariant set of \eqref{finalnormalform} for $x\in \omega_k$. Given that $\phi$ satisfies the property $(\mathrm P)$ when either $\theta=0$ or $\theta=\pi$, it follows that in the $u$-coordinates, recall \eqref{prenormalformB}, the resulting invariant manifolds
\begin{align}
 \theta=0:\quad \begin{cases} 
u_1& = \frac12 x (\phi_1^{0}(x,0)+\phi_2^{0}(x,0))\\
u_2& = \frac{1}{2i} x (\phi_1^{0}(x,0)-\phi_2^{0}(x,0))
 \end{cases}\quad \mbox{for}\quad x\in \omega_k(\nu,0,\alpha),\eqlab{inv1}
\end{align}
and
\begin{align}
 \theta=\pi:\quad\begin{cases} 
u_1& = \frac12 x (\phi_1^{\pi}(x,0)+\phi_2^{\pi}(x,0))\\
u_2& = \frac{1}{2i} x (\phi_1^{\pi}(x,0)-\phi_2^{\pi}(x,0))
 \end{cases}\quad \mbox{for}\quad x\in \omega_k(\nu,\pi,\alpha),\eqlab{inv2}
\end{align}
are \textit{real} when $x\in \mathbb R\cap \omega_k$.

%

In the case $k=1$, the invariant manifolds \eqref{inv1} and \eqref{inv2} correspond to the invariant manifolds of \cite[Theorem 2.13]{baldom2013a} used in the analysis of the analytic unfolding of the zero-Hopf (notice that the authors use $s=x^{-1}$ as the independent variable, with $x$ being denoted $\overline z$ in \cite{baldom2013a}, see \cite[Eq. (6)]{baldom2013a} with $\mu=0$). It is important for the analysis in \cite{baldom2013a} that in the $k=1$ case the domains $\omega_k(\nu,0,\alpha)$ and $\omega_k(\theta,\pi,\alpha)$ have non-empty intersection along the imaginary axis ($\phi^0(\cdot,0)\ne \phi^\pi(\cdot,0)$ in general there), recall \figref{omega}; this does not hold true in general in the case where $k\ge 2$. It would be interesting in future work to analyze what implications this has for higher co-dimensions versions of the phenomena in \cite{baldom2013a}.

{The reference \cite{bittmann2018a} obtains a stronger normal form (where the nonlinearity only depends upon resonant monomials) in the case $k=1$ for resonant saddle-nodes with eigenvalues $-\lambda,\lambda,0$ under a separate condition on terms of the form $xy$.}

 \begin{figure}[ht!]
\begin{center}
{\includegraphics[width=.65\textwidth]{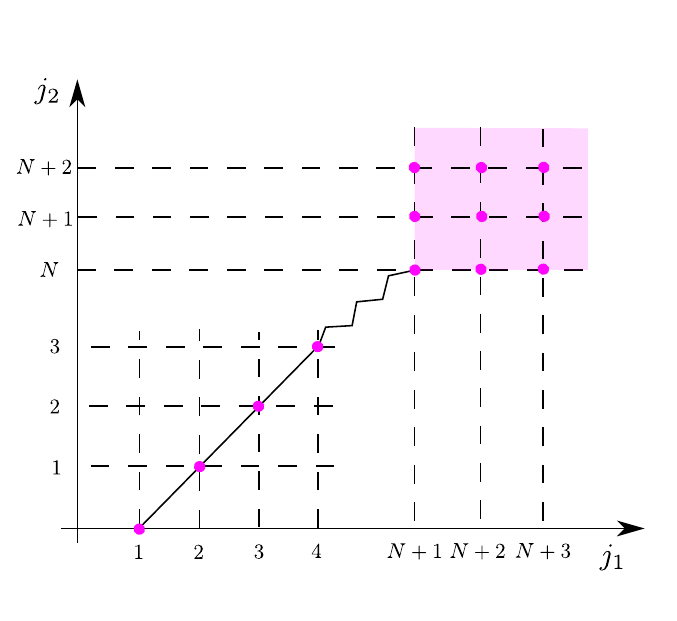}}
\end{center}
\caption{\rsp{For any $N\in \mathbb N$ there is a $C>0$ small enough so that the projection $\mathcal R_1$ of $\mathcal R$ onto $i=1$ is a subset of the purple region}. The case $i=2$ is obtained by reflecting the $i=1$ case around the bisector. }
\figlab{R1}
\end{figure}

\section{Discussion}\seclab{discussion}

In this final section, we would like to restrict our attention to the case where all nonzero eigenvalues belong to the Poincar\'e domain, i.e.~all of them belong to a common complex halfplane, for example the halfplane to the left of the imaginary axes.  Without a zero eigenvalue, reduction to normal form is well-known from a classical theorem of Poincar\'e: reduction to normal form can be done analytically and the normal form only contains a finite number of resonant monomials.

In \cite{bonckaert2008a}, their Theorem~3 was casted as  a possible generalization of this result, claiming that in the presence of one zero-eigenvalue, in the generic case $k=1$, one obtains $1$-summability of the normal form transformation in the real direction.  This is not true (it is not proven and even not true in general).  In fact, we would like to pose it as a research question:

\begin{problem} \thmlab{mainprob}
 Consider \eqref{prenormalform} satisfying \assumpref{Adiag} with $f$ being analytic near the origin and let all nonzero eigenvalues have strictly negative real part.  Then there exists a formal change of coordinates bringing the vector field in the form
 \begin{align}
 \frac{1}{k}x^{k+1} \frac{dz}{dx} = A z+ xg_{res}(x,z) + xg_{nonres}(x,z)\eqlab{finalnormal2}
\end{align}
where the Taylor series w.r.t.~$z$ of $g_{res}(x,z)$ only contains a finite number of resonant monomials of the form $xg_k(x)z^k$, and where $g_{nonres}(x,z)$ is ``as close to annihilation as possible''.  Try to capture the convergence or divergence properties of these transformations, possibly balancing the terms that can be annihilated in $g_{nonres}$.
\end{problem}

In the simplest setting the question relates to the analytic normalizing transformation 
\begin{align}
        x^2 \frac{dy}{dx} = -y + x f(x,y)\qquad \longrightarrow \qquad x^2 \frac{dz}{dx} = -z + x g(z).\eqlab{loray}
\end{align}
Because there is only one nonzero eigenvalue, it trivially belongs to the Poincar\'e domain.  There are no resonant terms here, and in the proposed normal form, the nonresonant terms are almost completely annihilated in the sense that $x.g_k$ only contains a single linear term in $x$ for all $k$.  Allowing nonresonant terms with linear coefficients allows Frank Loray to prove in \cite[Theorem 1]{loray2004a} that the normalizing transformation is convergent (and real analytic).  Later, the result has been generalized in \cite{schaefke2015a}.  In \cite{bonckaert2008a} the authors prove that the normal form with $xg(z)=g_1xz$ is obtainable in a Gevrey-1 way, but unlike what the authors claim typically not in a 1-summable way in the positive real direction.

We speculate that the general \textit{Banach space and Borel--Laplace approach} of the present paper may provide a route for such general results and we are presently working in this direction. Our intention is to ensure ({by adjusting $g_{res}$}) that all complex singularities of the normalizing transformation in the Borel plane are removable.  When all singularities disappear, the Laplace transform becomes convergent!  

 \newpage 
\bibliography{refs}
\bibliographystyle{plain}
\end{document}